\newtheorem{algorithm}{Algorithm}[section]
\newtheorem{theorem}{Theorem}[section]
\newtheorem{remark}{Remark}[section]
\newcommand{\qed}{\nobreak \ifvmode \relax \else \ifdim\lastskip<1.5em \hskip-\lastskip \hskip1.5em plus0em minus0.5em \fi \nobreak \vrule height0.75em width0.5em depth0.25em\fi} 
\newenvironment{proof}[1][Proof:]{\begin{trivlist} 
\item[\hskip \labelsep {\bfseries #1}]}{\end{trivlist}}
\def\R{{\bf R}}
\def\T{{\rm T}}
\begin{document}

\title{A robust BFGS algorithm for unconstrained nonlinear optimization problems}

\author{Yaguang Yang\thanks{
NRC, Office of Research, 21 Church Street, Rockville, 20850. Email: 
yaguang.yang@verizon.net}}

\date{\today}

\maketitle    % This command generates the title.

\begin{abstract}
The traditional BFGS algorithm has been proved very 
efficient. It is convergent for convex nonlinear optimization 
problems. However, for non-convex nonlinear optimization 
problems, it is known that the BFGS algorithm may not
be convergent. This paper proposes a robust BFGS algorithm
in the sense that the algorithm superlinearly converges to
a local minimum under some mild assumptions for
both convex and non-convex nonlinear optimization problems.
Numerical test on the CUTEst test set is reported to demonstrate
the merit of the proposed robust BFGS algorithm. 
%The performance of the robust BFGS algorithm implemented 
%in our Matlab function {\tt mBFGS} is compared to the 
%BFGS algorithm implemented in the Matlab Optimization 
%Toolbox function {\tt fminunc}, a limited memory BFGS 
%implemented as {\tt L-BFGS}, a descent conjugate gradient
%algorithm implemented as {\tt CG-Descent 5.3}, and a limited 
%memory, descent and conjugate algorithm implemented as 
%{\tt L-CG-Descent}. 
This result shows that the robust 
BFGS algorithm is very efficient and effective.
%\subclass{ 90C30 \and 90C53 \and 90C26}
\end{abstract}

{\bf keywords:} robust BFGS algorithm, Global convergence, Superlinear convergence,
unconstrained optimization.

{\bf AMS subject classifications:} 90C30.

\newpage
\section{ Introduction}
The robust optimization considers optimization problems with 
uncertainty, in which the uncertainty model is not stochastic. 
The decision maker constructs a solution that is feasible to any 
realization of the uncertainty in a given set. Although, this idea
became popular in the later of 1990s (see 
\cite{bggn03,bmn00,glp13,goo03,hladik16}),
similar idea was proposed by many researchers over last 70 years, 
for example, \cite{beale55,dantzig55, yang85,yang91}. 
Currently, many robust optimization algorithms consider 
a set of parameters in the optimization model that are 
fixed in traditional optimization problems. Some widely 
cited review papers are available in \cite{bbc11,rm19}. 
In this paper, we consider a different robust optimization 
algorithm that converges globally and superlinearly to a local minimum under
some mild assumptions for both convex and non-convex
optimization problems. In particular, we propose a robust 
BFGS algorithm, which is insensitive to the mathematical models,
convex or non-convex.

The BFGS algorithm is one of the most successful algorithms for 
unconstrained nonlinear programming \cite{bertse96}. Although 
global and superlinear convergence results have been established 
for convex problems \cite{powell76}, it has been proved that, 
the BFGS algorithm with Wolfe line search may not be convergent 
for general non-convex nonlinear functions \cite{dai02}. 
Unfortunately, Wolfe line search condition is one of the prerequisites 
for applying the Zoutendijk theorem \cite{zoutendijk70} 
to prove the global convergence of optimization algorithms. This 
motivates us to find a robust BFGS algorithm that is globally 
convergent for all twice differentiable nonlinear functions, convex 
or non-convex. We also would like the behavior of the robust BFGS 
algorithm to be the same as the behavior of BFGS algorithm
for convex problems, when the iterates 
approach a minimizer where the strong second order condition is met,
 i.e., we would like the proposed algorithm to be locally superlinearly 
convergent for non-convex optimization problems.

We will first examine how a robust Newton algorithm \cite{yyang13}
achieves global and quadratic convergence so that we can device a
robust BFGS algorithm to achieve global and superlinear convergence. 
The robust Newton algorithm in \cite{yyang13} uses a robust Hessian 
matrix which is a convex combination of the Hessian for Newton's 
algorithm and the identity matrix for the steepest descent algorithm. 
The most obvious advantage of using the convex combination other 
than a linear combination of these matrices is that the robust Newton 
algorithm may take the steepest descent iteration or Newton's 
iteration; it has the merits of both the steepest descent algorithm 
and Newton's algorithm, i.e., it is globally and quadratically convergent.
Similar to the idea that the BFGS estimates the Hessian matrix, we 
propose a robust BFGS update that estimates the robust Hessian 
matrix given in \cite{yyang13}. 
The robust BFGS matrix estimates a robust Hessian matrix 
which is a convex combination of an identity matrix for the 
steepest descent algorithm and a Hessian matrix for Newton's 
algorithm. The coefficient of the convex combination in the 
robust BFGS algorithm is dynamically chosen in every iteration. 
It is proved that, for any twice differentiable nonlinear function 
(convex or non-convex), the algorithm is globally convergent to 
a stationary point. If the stationary point is a local minimizer 
where the Hessian is strongly positive definite in a neighborhood 
of the minimizer, the iterates will eventually enter and stay in the 
neighborhood, and the robust BFGS algorithm reduces to the 
BFGS algorithm in this neighborhood. Therefore, the robust 
BFGS algorithm is superlinearly convergent. Moreover, the 
computational cost of the robust BFGS in each iteration is 
almost the same as the cost of the BFGS. 

%The proposed robust BFGS update is different from other 
%modified BFGS updates such as \cite{li01} and \cite{wlq06} in 
%several aspects. First, our robust BFGS algorithm 
%may take the steepest descent direction, while other robust 
%BFGS algorithm may not. Second, the selection of the parameter 
%of the convex combination is different from other methods.

The robust BFGS is implemented in the Matlab function {\tt mBFGS}. 
The implementation {\tt mBFGS} and an implementation of BFGS in 
Matlab Optimization Toolbox {\tt fminunc}
are tested against the CUTEst test set \cite{web}.
%\cite{bcgt95}. 
The performance of {\tt mBFGS} is compared to {\tt fminunc},
and other established and/or state-of-the-art optimization software, 
such as a limited memory BFGS algorithm \cite{nocedal80} implemented 
as {\tt L-BFGS}, a descent conjugate gradient algorithm \cite{hz05} 
implemented as {\tt CG-Descent 5.3}, and a limited memory descent 
and conjugate algorithm \cite{hz12} implemented as {\tt L-CG-Descent}. 
This result shows that the robust BFGS is very efficient and effective.

The remainder of the paper is organized as follows. 
Section 2 introduces the robust BFGS algorithm. Section 3 
discusses the algorithm's convergence properties. Section 4 
provides the test results. Section 5 summarizes the conclusions.

\section{The robust BFGS Method}

Our objective is to robustly minimize a multi-variable nonlinear 
(convex or nonconvex) function
\begin{equation}
\min f(x),
\label{obj}
\end{equation}
where $f$ is twice differentiable and $x \in \R^{n}$. 
Throughout the paper, we define by $g(x)$ or simply $g$ the 
gradient of $f(x)$, by $H(x)$ or simply $H$ the Hessian of 
$f(x)$. We denote $H \succ 0$ if a matrix $H$ is positive definite,
$H \succeq 0$ if $H$ is positive semi-definite. We denote by 
$\bar{x}$ a local minimizer of (\ref{obj}) that satisfies
\begin{equation}
g(\bar{x})=0.
\end{equation}
We will use subscript $k$ for the $k$th iteration,
hence, $x_0$ is used to represent the initial point. 
We make the following assumptions in the convergence analysis. 

%\newline
\medskip
{\bf Assumptions:}
\begin{itemize}
\item[1.] For an open set ${\cal M}$ containing the level set 
${\cal L}=\{ x: f(x) \le f(x_0) \}$, $g(x)$ is Lipschitz continuous, i.e., 
there exists a constant
$L>0$ such that
\begin{equation}
\|g(x)-g(y)\| \le L \|x-y\|,
\label{function}
\end{equation}
for all $x, y \in {\cal M}$.
\item[2.] There are positive numbers $\delta>0$, $1>m >0$, $M>1$, 
and a neighborhood of $\bar{x}$, defined by 
${\cal N}(\bar{x})=\{x: f({x})-f(\bar{x}) \le \delta \}$, such that for all
$x \in {\cal N}(\bar{x})$ and for all $z \in \R^n$,
\begin{equation}
m \|z\|^2 \le z^{\T} H(x) z \le M \|z\|^2.
\label{convex}
\end{equation}
\item[3.] There is a positive number $L >0$ such that for all $x \in {\cal N}(\bar{x})$,
\begin{equation}
\| H(x) - H(\bar{x}) \| \le L\|x-\bar{x}\|.
\label{Matrix}
\end{equation}
\end{itemize}
All these assumptions are standard for proving global and superlinear 
convergence for the BFGS algorithm (and similarly
for the robust BFGS algorithm). Assumption 1 
\cite[(3.13)]{nocedal99} is required when we use the Zoutendijk 
theorem to establish the global convergence for the robust BFGS
algorithm. Assumption 2 \cite[Assumption 8.1]{nocedal99} 
indicates that for all $x \in {\cal N}(\bar{x})$, a 
strong second order sufficient condition holds, i.e., there is a 
unique minimum in the neighborhood ${\cal N}(\bar{x})$, which 
will be used to prove the global\footnote{We say an algorithm
is globally convergent if the convergence does not depend
on the selection of the initial point.} and superlinear convergence. 
The real number of $m$ and $M$ are also used to choose the 
coefficient of the convex combination (see Eq. (16) and the discussion
about the selection of $\gamma_k$). Assumption 3 
\cite[Assumption 8.2]{nocedal99} will be 
needed only for the proof of the superlinear convergence. 
The parameter $L$ in (\ref{function}) may be different from 
the parameter $L$ in (\ref{Matrix}). But we can always choose 
the largest value of $L$ so that (\ref{function}) and (\ref{Matrix}) 
will hold for the same $L$, which will simplify our notation and proof.
It is worthwhile to point out that all functions which have isolated local
minimums and smooth enough will be in the class that satisfies 
these assumptions. 

For most optimization algorithms, the search for the minimizer of
(\ref{obj}) is carried out by using 
\begin{equation}
x_{k+1}=x_k+\alpha_k d_k,
\label{updateX}
\end{equation}
where $\alpha_k$ is the step size, and $d_k$ is the search direction. 
For Newton's method, the search direction $d_k$ is defined by
\begin{equation}
H(x_k) d_k = - g(x_k),
\label{dNewton}
\end{equation}
and the step size is set to $\alpha_k =1$. If Newton's method 
converges, it converges fast (quadratic) but (a) it may not converge 
at all, and (b) the computation of $H(x_k)$ is expensive.

To overcome the second problem associated with Newton's method,
the BFGS algorithm is developed to reduce the cost of the computation
of $H(x_k)$ while retaining the feature of fast (superlinear) 
convergence. It estimates $H(x_k)$ using the following update 
formula
\begin{equation}
B_{k+1}=B_k-\frac{B_ks_ks_k^{\T}B_k}{s_k^{\T}B_ks_k}+\frac{y_ky_k^{\T}}{y_k^{\T}s_k},
\label{BFGS}
\end{equation}
where 
\begin{equation}
s_k=x_{k+1}-x_k, \,\,\,\, y_k=g(x_{k+1})-g(x_k).
\label{skyk}
\end{equation}
The BFGS update has been very successful because its efficiency is
proved in theory and demonstrated in computational practice.
However, the first problem associated with Newton's method
still exists for the BFGS algorithm, i.e., the BFGS algorithm
may not converge for nonlinear non-convex problems \cite{dai02}.

To overcome the first problem associated with Newton's method, 
in \cite{yyang13}, a robust 
Hessian $(\gamma_k I +(1-\gamma_k) H(x_k))$ is suggested 
and the search for a minimizer is carried out along a direction 
$d_k$ that satisfies 
\begin{equation}
(\gamma_k I +(1-\gamma_k) H(x_k)) d_k = Q(x_k) d_k = - g(x_k),
\label{direction}
\end{equation}
where $\gamma_k \in [0,1]$ is carefully selected in every iteration. 
Clearly, the robust Hessian $Q(x_k)$ is a convex combination of 
the identity matrix for the steepest descent algorithm and the 
Hessian for Newton's algorithm. When $\gamma_k=1$, the 
algorithm reduces to the steepest descent algorithm; when 
$\gamma_k=0$, the algorithm reduces to Newton's algorithm. 
The global and quadratic convergence for the robust 
Newton algorithm is established in \cite{yyang13}.
However, the second problem associated with Newton's method
still exists for the robust Newton algorithm, i.e., the iteration of
the robust Newton algorithm is very expensive because of the
computation of $H(x_k)$.

To simultaneously fix both convergence and efficiency problems
associated with the Newton's algorithm, we propose a robust 
BFGS algorithm to estimate the robust Hessian $Q(x_k)$. 
We show that the computational cost of the robust BFGS 
in each iteration is roughly the same as computational cost 
of the BFGS, and the robust BFGS algorithm is globally 
and superlinearly convergent for all nonlinear unconstrained 
problems, convex or non-convex, under the conditions that 
Assumptions 1-3 hold.

Note that the BFGS updates $B_{k+1}$, an estimation of 
$H(x_{k+1})$, which is derived from the secant equation
\begin{equation}
y_k=B_{k+1} s_k.
\label{secant1}
\end{equation}
From (\ref{direction}) and (\ref{secant1}), the robust BFGS 
update $E_{k+1}$, an estimation of 
$Q(x_{k+1})=\gamma_k I +(1-\gamma_k) H_{k+1}$, is
derived from a modified secant equation
\begin{equation}
z_k=E_{k+1} s_k,
\label{secant0}
\end{equation}
where $z_k$ satisfies
\begin{equation}
z_k=E_{k+1} s_k=(\gamma_k I +(1-\gamma_k) B_{k+1})
s_k =\gamma_ks_k+(1-\gamma_k) y_k,
\label{secant2}
\end{equation}
and $\gamma_k \in [0,1]$ will be carefully selected in every iteration. If
$\gamma_k=1$, $E_{k+1}=I$ estimates $Q(x_{k+1})$ and the 
robust BFGS reduces to the steepest descent 
method from (\ref{direction}). If $\gamma_k=0$, $E_{k+1}=B_{k+1}$ 
estimates $Q(x_{k+1})=H_{k+1}$ and the 
robust BFGS reduces to the BFGS method.

It is straightforward to derive the robust BFGS formula from 
(\ref{secant2}) following exactly the same procedures of 
\cite[pages 197-198]{nocedal99}, which gives:
\begin{equation}
E_{k+1}=E_k-\frac{E_ks_ks_k^{\T}E_k}{s_k^{\T}E_ks_k}
+\frac{z_kz_k^{\T}}{z_k^{\T}s_k}.
\label{robustBFGS}
\end{equation}
Using the Sherman-Morrison-Woodbury formula 
\cite[page 51]{gv89}, we have
\begin{equation}
E_{k+1}^{-1}=
\left(I-\frac{s_{k}z_{k}^{\T}}{z_{k}^{\T}s_{k}}\right)E_{k}^{-1}
\left(I-\frac{z_{k}s_{k}^{\T}}{z_{k}^{\T}s_{k}}\right)
+\frac{s_{k}s_{k}^{\T}}{z_{k}^{\T}s_{k}}.
\label{E1}
\end{equation}
Therefore, similar to the selection of Newton's direction of 
(\ref{dNewton}), the robust BFGS search direction is defined by
\begin{equation}
E_k d_k=-g_k,
\label{dirMBFGS}
\end{equation}
hence, the search direction $d_k$ is calculated by 
\begin{equation}
d_k=-E_k^{-1} g_k,
\label{calMBFGS}
\end{equation}
where $E_k^{-1}$ is updated using ({\ref{E1}).

To apply the Zoutendijk theorem (see Theorem \ref{Zoutendijk}) 
in the global convergence 
analysis in the next section, $d_k$ is desired to be a descent 
direction, this requires $E_k \succ 0$ for all $k \ge 0$. 
Assume $E_0\succ 0$ is selected, and $E_{k-1}\succ 0$ is 
obtained. Since $d_{k-1}$ is a descent direction from 
(\ref{dirMBFGS}), it follows that $x_{k} \neq x_{k-1}$, hence,
$s_{k-1}=x_{k}-x_{k-1} \neq 0$. We will
select $\gamma_{k-1}$ such that $z_{k-1}^{\T}s_{k-1}>0$, 
Under this condition, in view of (\ref{E1}), for any 
$0 \neq v \in \R^n$, if $v^{\T}s_{k-1}=0$, then  
\[
v^{\T}E_{k}^{-1}v =v^{\T}E_{k-1}^{-1}v >0,
\]
if $v^{\T}s_{k-1} \neq 0$, then  
\[
v^{\T}E_{k}^{-1}v \ge \frac{(v^{\T} s_{k-1})^2}
{z_{k-1}^{\T}s_{k-1}} >0,
\]
i.e., $v^{\T}E_{k-1}^{-1}v >0$. Therefore,
$E_{k}\succ 0$, i.e., $d_k$ is indeed a descent direction for all 
$k \ge 0$.

As a matter of fact, we want to select $\gamma_k$ 
to meet stronger conditions:
\begin{equation}
m \le \frac{z_k^{\T}s_k}{s_k^{\T}s_k} \,\,\,\,\, \text{and} \,\,\,\,\,
\frac{z_k^{\T}z_k}{z_k^{\T}s_k} \le M ,
\label{conditions}
\end{equation}
where $m$ and $M$ are consistent to the ones in Assumption 2 
and they satisfy $0<m<1<M<\infty$. Inequalities in 
(\ref{conditions}) will be used to prove the global convergence 
of the robust BFGS algorithm. 

In view of (\ref{secant2}), the first
inequality of (\ref{conditions}) can be rewritten as 
\begin{eqnarray}
& z_k^{\T}s_k=(\gamma_k s_k^{\T} +(1-\gamma_k)y_k^{\T} )s_k 
\ge m s_k^{\T}s_k \nonumber \\
\iff & \gamma_k(s_k^{\T}s_k-y_k^{\T}s_k)\ge ms_k^{\T}s_k-y_k^{\T}s_k.
\label{add1}
\end{eqnarray}
Since $m<1$, we have 
\begin{equation}
s_k^{\T}s_k-y_k^{\T}s_k > ms_k^{\T}s_k-y_k^{\T}s_k.
\label{add2}
\end{equation}
First, if $s_k^{\T}s_k-y_k^{\T}s_k =0$, we have 
$0>ms_k^{\T}s_k-y_k^{\T}s_k$, and inequality (\ref{add1}) holds 
for any $\gamma_k \in [0,1]$. Second, if $s_k^{\T}s_k-y_k^{\T}s_k \neq0$, 
we denote
\begin{equation}
\check{\gamma}_k=\frac{ms_k^{\T}s_k-y_k^{\T}s_k}{s_k^{\T}s_k-y_k^{\T}s_k}
\label{gamma0}
\end{equation}
and divide our discussion into two cases: Case (a)
if $s_k^{\T}s_k-y_k^{\T}s_k >0$, since $s_k^{\T}s_k>m s_k^{\T}s_k$,
it follows that $\check{\gamma}_k<1$; since we select 
$\gamma_k \in [0,1]$, from (\ref{add1}), it follows that
$1 \ge {\gamma}_k \ge \max \{ \check{\gamma}_k, 0 \}$. 
Case (b) if $s_k^{\T}s_k-y_k^{\T}s_k <0$, we have 
$m s_k^{\T}s_k-y_k^{\T}s_k <s_k^{\T}s_k-y_k^{\T}s_k <0$ 
which shows $\check{\gamma}_k>1$.
Since $\gamma_k \in [0,1]$, summarizing the above discussion
shows that the first inequality of (\ref{conditions}) is equivalent to
\begin{equation}
\left\{ 
\begin{array}{ll}
\max \{ \check{\gamma}_k, 0 \} \le \gamma_k \le 1 &
\text{if }  s_k^{\T}s_k > y_k^{\T}s_k , \\
0 \le \gamma_k \le 1,  &
\text{if } s_k^{\T}s_k = y_k^{\T}s_k, \\
0 \le \gamma_k \le 1 \le \check{\gamma}_k, &
\text{if } s_k^{\T}s_k < y_k^{\T}s_k.
\end{array} \right.
\label{gamma1}
\end{equation}

In view of (\ref{secant2}), the second inequality of (\ref{conditions}) 
can be rewritten as
\begin{subequations}
\begin{align}
& z_k^{\T}z_k=(\gamma_k s_k +(1-\gamma_k)y_k )^{\T}(\gamma_k s_k +(1-\gamma_k)y_k ) 
\le M (\gamma_k s_k +(1-\gamma_k)y_k )^{\T}s_k \label{a}\\
\iff & p(\gamma_k) \equiv \gamma_k^2(s_k-y_k)^{\T}(s_k-y_k) 
+ \gamma_k(s_k-y_k)^{\T}(2y_k-Ms_k)+y_k^{\T}(y_k-Ms_k) \le 0. \label{b}
\end{align}
\label{Mcondition}
\end{subequations}
If $s_k = y_k$, then, inequality (\ref{b}) reduces to 
$(1-M)y_k^{\T}y_k \le 0$ which holds for 
$\forall {\gamma}_k \in [0,1]$. If $s_k \neq y_k$, 
$p(\gamma_k)$ is a quadratic and convex function of 
$\gamma_k$. Since $M>1$, it is
easy to see that the strict inequality of (\ref{a}) holds for $\gamma_k=1$, hence 
$p(1)<0$. Therefore $p(\gamma_k)=0$ has two solutions $\underline{\gamma}_k$ and 
$\bar{\gamma}_k$ satisfying $\underline{\gamma}_k<1<\bar{\gamma}_k$ and for any 
$\gamma_k \in [\underline{\gamma}_k, \bar{\gamma}_k]$, (\ref{a}) holds.
From (\ref{b}), if $s_k \neq y_k$,
\begin{equation}
\begin{array}{ll}
\underline{\gamma}_k & = \frac{(s_k-y_k)^{\T}(Ms_k-2y_k)-
\sqrt{((s_k-y_k)^{\T}(Ms_k-2y_k))^2-4(s_k-y_k)^{\T}(s_k-y_k)y_k^{\T}(y_k-Ms_k)}}
{2(s_k-y_k)^{\T}(s_k-y_k)}  \\
& = \frac{(s_k-y_k)^{\T}(Ms_k-2y_k)-
\sqrt{(M(s_k^{\T}(s_k-y_k))^2+4(M-1)(s_k^{\T}s_ky_k^{\T}y_k-(y_k^{\T}s_k)^2)}}
{2(s_k-y_k)^{\T}(s_k-y_k)}.
\end{array}
\label{gamma2}
\end{equation}

Since $\gamma_k \in [0,1]$, we have
\begin{equation}
\max \{ 0, \underline{\gamma}_k \} \le \gamma_k \le 1.
\label{gamma3}
\end{equation} 
Intuitively, it is desirable to have $\gamma_k \in [0,1]$ as close 
to zero as possible so that the algorithm will approach to the 
standard BFGS algorithm. Therefore, we want
to select the smallest $\gamma_k \in [0,1]$ satisfying 
(\ref{gamma1}) and (\ref{gamma3}).
We consider all possible relations among $ms_k^{\T}s_k$,  $s_k^{\T}s_k$, and $y_k^{\T}s_k$. Since $m<1$, we have 
$ms_k^{\T}s_k < s_k^{\T}s_k$.
\begin{itemize}
\item{Case 1 ($y_k^{\T}s_k < ms_k^{\T}s_k < s_k^{\T}s_k$)}: 
To select the smallest $\gamma_k \in [0,1]$ satisfying 
(\ref{gamma1}) and (\ref{gamma3}), combining the first relation 
of (\ref{gamma1}) and (\ref{gamma3}), we have 
\[
{\gamma}_k = \max \{\max \{0,  \check{\gamma}_k \} ,\underline{\gamma}_k \}.
\]
From (\ref{gamma0}), we know  $\check{\gamma}_k>0$ in this 
case. Therefore, $ {\gamma}_k = \max \{ \check{\gamma}_k ,\underline{\gamma}_k \}$.
\item{Case 2 ($ ms_k^{\T}s_k \le y_k^{\T}s_k < s_k^{\T}s_k$)}: 
To select the smallest $\gamma_k \in [0,1]$ satisfying 
(\ref{gamma1}) and (\ref{gamma3}), combining the first 
relation of (\ref{gamma1}) and (\ref{gamma3}), we have 
\[
{\gamma}_k = \max \{\max \{0,  \check{\gamma}_k \} ,\underline{\gamma}_k \}.
\]
From (\ref{gamma0}), we know  $\check{\gamma}_k \le 0$ in this 
case. Therefore, 
$ {\gamma}_k = \max \{ 0 ,\underline{\gamma}_k \}$.
\item{Case 3 ($ ms_k^{\T}s_k < s_k^{\T}s_k \le y_k^{\T}s_k$)}: 
To select the smallest $\gamma_k \in [0,1]$ 
satisfying (\ref{gamma1}) and (\ref{gamma3}), combining the 
last 2 relations of (\ref{gamma1}) and (\ref{gamma3}), we have 
$ {\gamma}_k = \max \{ 0 ,\underline{\gamma}_k \}$. 
In particular, when $s_k = y_k$, we select ${\gamma}_k = 0$.
\end{itemize}
Combining all cases, we have
\begin{equation}
{\gamma}_k = \left\{ \begin{array}{lr}
\max \{\underline{\gamma}_k, \check{\gamma}_k \}, & \text{if } ms_k^{\T}s_k > y_k^{\T}s_k, \\
\max \{0, \underline{\gamma}_k \}, &  \text{if } ms_k^{\T}s_k \le y_k^{\T}s_k, \\
0, & \text{if } s_k = y_k.
\end{array} \right.
\label{gamma}
\end{equation}

\begin{remark}
Equation (\ref{E1}) can be replaced by the following equivalent 
representation 
\begin{equation}
E_{k+1}^{-1}=E_{k}^{-1}
-\frac{s_{k}z_{k}^{\T}}{z_{k}^{\T}s_{k}}E_{k}^{-1}
-E_{k}^{-1}\frac{z_{k}s_{k}^{\T}}{z_{k}^{\T}s_{k}}
+\frac{s_{k}}{z_{k}^{\T}s_{k}}\left( z_{k}^{\T}E_{k}^{-1}z_{k} \right) \frac{s_{k}^{\T}}{z_{k}^{\T}s_{k}}
+\frac{s_{k}s_{k}^{\T}}{z_{k}^{\T}s_{k}}.
\label{badImp}
\end{equation}
which requires fewer computational counts than (\ref{E1}) does. 
However, this equivalent formula is not numerically stable. 
For CUTEst problem {\tt heart6ls}, when the condition number of 
$E_{k+1}^{-1}$ is poor, two of the eigenvalues of 
$E_{k+1}^{-1}$ generated by (\ref{badImp}) are negative, 
but all eigenvalues of $E_{k+1}^{-1}$ generated by (\ref{E1}) 
are greater than zero. This means that formula (\ref{E1}) 
is more robust than formula (\ref{badImp}).
\end{remark}
\begin{remark}
Although the two formulas in (\ref{gamma2}) are equivalent, 
the second one is numerically much more stable. For the 
CUTEst test problem {\tt djkl}, using the first formula results 
in a negative value inside the square root because of the 
computational error, while the second formula ensures a 
positive value inside the square root.
\end{remark}
\begin{remark}
It is worthwhile to note that if $y_k^{\T}y_k \le My_k^{\T}s_k$ 
holds, then (\ref{a}) holds for ${\gamma}_k=0$, i.e., 
$\underline{\gamma}_k \le 0$. In addition, if 
$ms_k^{\T}s_k \le y_k^{\T}s_k$ holds at the same time, 
then from (\ref{gamma}), $\gamma_k=0$. Moreover, $d_k$ 
is a descent direction.
\label{first}
\end{remark}

Now we are ready to present the robust BFGS algorithm.
\begin{algorithm} {\bf robust BFGS} \\
\begin{algorithmic}[1] 
\STATE Data:  $0<\epsilon$, $m<1$, and $1<M<\infty$, initial $x_0$, and 		     
     $E_0=I$.
\FOR{ k=0,1,2,...}
	\STATE Calculate gradient $g(x_k)$. If $\|g(x_k)\|<\epsilon$, stop;
	\STATE Compute search direction $d_k$ using (\ref{calMBFGS});
	\STATE Set $x_{k+1}=x_k+\alpha_k d_k$, where $\alpha_k$ satisfies the
		Wolfe condition to be defined later;
	\STATE Compute $s_k$ and $y_k$ using (\ref{skyk});
	\STATE Select $\gamma_k$ using (\ref{gamma}), and compute $z_k$ 
		using (\ref{secant2});
	\STATE Update $E_{k+1}^{-1}$ using (\ref{E1});
	\STATE $k \leftarrow k+1$;
\ENDFOR
\end{algorithmic}
\label{newAlgo}
\end{algorithm}

\begin{remark}
It is clear that the computation involving the selection of 
$\gamma_k$ is negligible (requires ${\cal O}(n)$ operations). 
Therefore, the cost of robust BFGS in each iteration is almost
the same as the cost of the BFGS.
\label{efficiency}
\end{remark}
 
In the next section, our discussion will focus on the proof 
of global and superlinear convergence of Algorithm \ref{newAlgo}. 
The convergence properties are directly related to the goodness 
of the search direction and step length. The quality of the search 
direction is measured by
\begin{equation}
\cos(\theta_k)=-\frac{g_k^Td_k}{\|g_k\| \|d_k\|}=\frac{d_k^{\T}E_ks_k}{\|E_kd_k\|\|s_k\|},
\label{cos}
\end{equation}
where the second equation follows from (\ref{updateX}) 
and (\ref{dirMBFGS}). A good step length $\alpha_k$ should 
satisfy the following Wolfe condition.
\begin{subequations}
\begin{align}
f(x_k+\alpha_k d_k) \le f(x_k)+\sigma_1 \alpha_kg_k^Td_k, \\
d_k^{\T}g(x_k+\alpha_k d_k) \ge \sigma_2 g_k^Td_k,
\end{align}
\end{subequations}
where $0 < \sigma_1 < \sigma_2<1$. The existence of the 
Wolfe condition is established in \cite{wolfe69,wolfe71}.
An algorithm that finds, in finite steps, a point satisfying 
the Wolfe condition is given in \cite{more90}. Therefore, 
we will not discuss step size selection in this paper.

\section{Convergence Analysis}

An important global convergence result was given by 
Zoutendijk \cite{zoutendijk70} which can be stated as follows.
\begin{theorem}
Suppose that $f$ is bounded below in $\R^n$ and that $f$ is 
continuously twice differentiable in an open set ${\cal M}$ 
containing the level set ${\cal L}=\{ x: f(x) \le f(x_0) \}$. 
Assume that the gradient is Lipschitz continuous on 
${\cal M}$, i.e., there exists a constant $L>0$ such that
\begin{equation}
\|g(x)-g(y)\| \le L \|x-y\|,
\end{equation}
for all $x, y \in {\cal M}$. Assume further that $d_k$ is a 
descent direction and $\alpha_k$ satisfies the Wolfe condition. 
Then
\begin{equation}
\sum_{k \ge 0} \cos^2(\theta_k) \| g_k \|^2 < \infty.
\end{equation}
\hfill \qed
\label{Zoutendijk}
\end{theorem}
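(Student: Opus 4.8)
The plan is to follow the classical Zoutendijk argument, which couples the two halves of the Wolfe condition with the Lipschitz bound on $g$. First I would work with the curvature (second Wolfe) inequality $d_k^{\T}g(x_k+\alpha_k d_k) \ge \sigma_2 g_k^{\T}d_k$ and subtract $g_k^{\T}d_k$ from both sides, which gives $(g_{k+1}-g_k)^{\T}d_k \ge (\sigma_2-1)g_k^{\T}d_k$. Because $d_k$ is a descent direction we have $g_k^{\T}d_k<0$, and because $\sigma_2<1$ the right-hand side is a strictly positive quantity; this is what will eventually produce a useful lower bound on $\alpha_k$.

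Next I would bound the left-hand side from above. Since $\alpha_k$ satisfies the sufficient-decrease (first Wolfe) condition, $f(x_{k+1})\le f(x_k)\le\cdots\le f(x_0)$, so every iterate stays in the level set ${\cal L}$ and hence in ${\cal M}$, which licenses the use of (\ref{function}); together with $x_{k+1}-x_k=\alpha_k d_k$ this yields $(g_{k+1}-g_k)^{\T}d_k \le \|g_{k+1}-g_k\|\,\|d_k\| \le L\alpha_k\|d_k\|^2$. Combining with the inequality from the previous paragraph gives $\alpha_k \ge \dfrac{(\sigma_2-1)g_k^{\T}d_k}{L\|d_k\|^2}>0$. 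Substituting this lower bound into the first Wolfe condition $f(x_{k+1})\le f(x_k)+\sigma_1\alpha_k g_k^{\T}d_k$ — which is legitimate since $\sigma_1 g_k^{\T}d_k<0$, so replacing $\alpha_k$ by a smaller positive number only enlarges the right-hand side — produces $f(x_{k+1}) \le f(x_k) - \dfrac{\sigma_1(1-\sigma_2)}{L}\,\dfrac{(g_k^{\T}d_k)^2}{\|d_k\|^2}$. Using the definition (\ref{cos}), $(g_k^{\T}d_k)^2/\|d_k\|^2 = \cos^2(\theta_k)\|g_k\|^2$, so with $c:=\sigma_1(1-\sigma_2)/L>0$ we obtain $f(x_{k+1}) \le f(x_k) - c\cos^2(\theta_k)\|g_k\|^2$.

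Finally I would sum this inequality for $k=0,\dots,N$, which telescopes to $c\sum_{k=0}^{N}\cos^2(\theta_k)\|g_k\|^2 \le f(x_0)-f(x_{N+1})$. Since $f$ is bounded below on $\R^n$, the right-hand side is bounded above uniformly in $N$; letting $N\to\infty$ gives $\sum_{k\ge0}\cos^2(\theta_k)\|g_k\|^2<\infty$, as claimed. I do not expect a serious obstacle here, since the result is standard; the one point needing explicit care is the claim that consecutive iterates (and, if one wants the Lipschitz estimate along the step, the whole segment) lie in ${\cal M}$, which is exactly what the monotone decrease of $f$ guaranteed by the first Wolfe condition provides. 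The remaining manipulations are elementary inequalities.
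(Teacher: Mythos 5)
Your proof is correct, and it is exactly the classical Wolfe--Zoutendijk argument: the paper itself states this theorem without proof, simply quoting the result from \cite{zoutendijk70} (the same argument appears in \cite{nocedal99}), so there is nothing in the paper's treatment that your derivation deviates from. The one point needing care --- that the sufficient-decrease condition keeps all iterates in ${\cal L}\subset{\cal M}$ so the Lipschitz bound (\ref{function}) may be applied to $g(x_{k+1})-g(x_k)$ --- is handled correctly in your write-up.
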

The Zoutendijk theorem indicates that if $d_k$ is a descent 
direction and $\cos(\theta_k) \ge \delta >0$, for all $k$, then 
the algorithm is globally convergent because 
$\lim_{k \rightarrow \infty} \|g_k\|=0$. 

Let $\{ \eta_k \}$ and $\{ \nu_k \}$ be two scalar nonnegative 
infinite sequences, we define $\eta_k=o(\nu_k)$ \cite[Page 591]{nocedal99}
if the ratio 
$\{ \eta_k / \nu_k \}$ approaches zero, that is 
\begin{equation}
\lim_{k \rightarrow \infty} \frac{\eta_k}{\nu_k} =0.
\label{littleO}
\end{equation}
Now we are ready to state the main convergence result for 
the robust BFGS algorithm.

\begin{theorem}
Suppose that $f$ is bounded below in $\R^n$, $f$ is continuously 
twice differentiable in an open set ${\cal M}$ containing the 
level set ${\cal L}=\{ x: f(x) \le f(x_0) \}$,
and Assumptions 1-3 hold. Then the sequence generated by 
Algorithm \ref{newAlgo} is globally convergent in the 
sense that $\lim \inf \| g(x_k) \| \rightarrow 0$. In addition, 
if $\| s_k \|=o(1/k^{1+\epsilon})$ for any $\epsilon >0$, 
then the sequence generated by 
Algorithm \ref{newAlgo} converges to some local minimum
point $\bar{x}$ satisfying $\| g(\bar{x}) \|=0$ with superlinear rate.
%the sequence generated by Algorithm \ref{newAlgo} converges 
%to a local minimizer $\bar{x}$ with superlinear rate.
\end{theorem}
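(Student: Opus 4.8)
\emph{Structure and preliminaries.} I would establish the two claims separately: first $\liminf\|g_k\|=0$ for the whole run of Algorithm \ref{newAlgo}, and then, under the extra hypothesis $s_k\to 0$, convergence of the entire sequence to a single $\bar x$ with $g(\bar x)=0$ at superlinear rate. Two facts are immediate. The selection rule (\ref{gamma}) was built precisely so that (\ref{conditions}) holds at every iteration, in particular $z_k^{\T}s_k\ge m\,s_k^{\T}s_k>0$ whenever the algorithm has not terminated; with $E_0=I\succ 0$, induction through (\ref{E1}) keeps $E_k\succ 0$, so $g_k^{\T}d_k=-g_k^{\T}E_k^{-1}g_k<0$ and each $d_k$ is a descent direction. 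Since $\alpha_k$ obeys the Wolfe conditions and Assumption 1 holds, Theorem \ref{Zoutendijk} yields $\sum_{k\ge 0}\cos^2(\theta_k)\|g_k\|^2<\infty$.

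\emph{Global convergence.} The plan is a Byrd--Nocedal-style trace/determinant estimate on the update (\ref{modifiedBFGS}). Using $\mathrm{tr}(E_{k+1})=\mathrm{tr}(E_k)-\|E_ks_k\|^2/(s_k^{\T}E_ks_k)+\|z_k\|^2/(z_k^{\T}s_k)$ and $\det(E_{k+1})=\det(E_k)\,(z_k^{\T}s_k)/(s_k^{\T}E_ks_k)$, put $\psi(E)=\mathrm{tr}(E)-\ln\det(E)$ (which is $>0$ for $E\succ 0$) and $q_k=(s_k^{\T}E_ks_k)/\|s_k\|^2$; a short computation using (\ref{cos}) gives
\[
\psi(E_{k+1})=\psi(E_k)+\Big(\tfrac{\|z_k\|^2}{z_k^{\T}s_k}-\ln\tfrac{z_k^{\T}s_k}{\|s_k\|^2}-1\Big)+\Big(1-\tfrac{q_k}{\cos^2\theta_k}+\ln\tfrac{q_k}{\cos^2\theta_k}\Big)+\ln\cos^2\theta_k .
\]
By (\ref{conditions}) the first bracket is $\le c:=M-\ln m-1$, which is positive because $M>1$ and $m<1$; and $1-t+\ln t\le 0$ for all $t>0$ makes the second bracket $\le 0$. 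Hence $\psi(E_{k+1})\le\psi(E_k)+c+\ln\cos^2\theta_k$; summing and using $\psi\ge 0$ gives $\sum_{j<k}\ln(1/\cos^2\theta_j)\le\psi(E_0)+ck$, so the averages $\tfrac{1}{k}\sum_{j<k}\ln(1/\cos^2\theta_j)$ stay bounded. If $\liminf\|g_k\|>0$, the Zoutendijk sum forces $\sum_k\cos^2\theta_k<\infty$, hence $\cos^2\theta_k\to 0$, hence $\ln(1/\cos^2\theta_k)\to\infty$, so those averages diverge --- a contradiction. Thus $\liminf\|g_k\|=0$.

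\emph{Superlinear convergence under $s_k\to 0$.} For this part the plan runs as follows. Since $\{f(x_k)\}$ decreases and is bounded below, $f(x_k)\downarrow f_\infty$; choosing a subsequence with $g_{k_j}\to 0$ and taking its limit to be the point $\bar x$ of Assumptions 2--3, continuity forces $f_\infty=f(\bar x)$. Hence for all large $k$, $f(x_k)-f(\bar x)<\delta$, so $x_k\in{\cal N}(\bar x)$; the monotone decrease of $f$ together with $s_k\to 0$ confines the whole segment $[x_k,x_{k+1}]$ to a fixed convex ball $B(\bar x,\rho)\subseteq{\cal N}(\bar x)$ once the iterates are close, and on that ball strong convexity gives $\tfrac{m}{2}\|x_k-\bar x\|^2\le f(x_k)-f(\bar x)\to 0$, so $x_k\to\bar x$. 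On $B(\bar x,\rho)$ one has $y_k=\bar H_ks_k$ with $\bar H_k=\int_0^1H(x_k+ts_k)\,dt$ and $mI\preceq\bar H_k\preceq MI$, whence $y_k^{\T}s_k\ge m\,s_k^{\T}s_k$ and $y_k^{\T}y_k=s_k^{\T}\bar H_k^2s_k\le M\,s_k^{\T}\bar H_ks_k=M\,y_k^{\T}s_k$; Remark \ref{first} then gives $\gamma_k=0$, so $z_k=y_k$ and (\ref{modifiedBFGS}) is the ordinary BFGS update with direction $d_k=-E_k^{-1}g_k$. From this index on the iteration is plain BFGS with Wolfe steps, acting on a region where (Assumptions 2--3) the Hessian is uniformly positive definite and Lipschitz, with $x_k\to\bar x$; the classical global-and-superlinear convergence theory of BFGS on such functions (\cite{powell76}; Dennis--Mor\'e) then applies to the tail and yields R-linear and hence superlinear convergence, $\|x_{k+1}-\bar x\|/\|x_k-\bar x\|\to 0$.

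\emph{Main obstacle.} The crux is the global step: converting the Zoutendijk sum into $\liminf\|g_k\|=0$ through the $\psi$-estimate, and --- underpinning it --- verifying that the case analysis leading to (\ref{gamma}) genuinely enforces \emph{both} inequalities of (\ref{conditions}) (not merely $z_k^{\T}s_k>0$) at every iteration and that the admissible range of $\gamma_k$ is never empty; that bookkeeping is where a gap could hide. In the local step the one delicate point is that $E_k$ inherits history from iterations outside ${\cal N}(\bar x)$, so one must confirm that the BFGS superlinear machinery needs only the curvature bounds that hold on the tail --- which the argument above provides.
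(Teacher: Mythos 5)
Your proposal is correct and follows essentially the same route as the paper: descent directions from (\ref{conditions}) and (\ref{E1}), the Byrd--Nocedal trace--determinant bound on $\cos\theta_k$ (which the paper simply cites as Theorem 8.5 of \cite{nocedal99} and you write out explicitly) combined with Zoutendijk for $\liminf\|g_k\|=0$, and then, once $x_k$ enters ${\cal N}(\bar x)$, the bounds $y_k^{\T}s_k\ge m\,s_k^{\T}s_k$ and $y_k^{\T}y_k\le M\,y_k^{\T}s_k$ via Remark \ref{first} forcing $\gamma_k=0$, so the method reduces to standard BFGS whose classical theory gives the superlinear rate. Your added details (the explicit $\psi$-estimate and the strong-convexity argument that $x_k\to\bar x$) only flesh out steps the paper handles by citation or by the isolated-minimizer remark.
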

\begin{proof}
First, we show that $d_k$ is a descent direction. From the 
selection of $\gamma_k$ using (\ref{gamma}), 
we know that (\ref{conditions}) holds. The first inequality 
of (\ref{conditions}) guarantees $z_k^{\T}s_k >0$. 
Using this fact and (\ref{E1}), we conclude $E_k\succ 0$ 
since $E_0=I \succ 0$. Therefore, $d_k$ is a descent 
direction. Since (\ref{conditions}) holds for all $k \ge 0$, 
following exactly the same arguments used in the proof of 
\cite[Theorem 8.5]{nocedal99}, we have  a sub-sequence 
$\{ j_k \}$ such that
\begin{equation}
\cos(\theta_{j_k}) \ge \delta > 0.
\label{subseq}
\end{equation}
According to Step 5 of Algorithm \ref{newAlgo}, $\alpha_k$ is
selected to satisfy the Wolfe conditions. In view of 
Theorem~\ref{Zoutendijk}, we have 
\begin{equation}
\sum_{k \in  \{ j_k \}} \cos^2(\theta_k) \| g_k \|^2 < \infty.
\label{liminf}
\end{equation}
Using the notation of $\lim \inf$ defined in \cite[Page 578]{nocedal99}, 
inequalities (\ref{subseq}) and (\ref{liminf}) indicate
\begin{equation}
\lim \inf \| g(x_k) \| \rightarrow 0.
\label{limit}
\end{equation}
%To simplify the notation, we use the iteration $k$ for $j_k$
%fom now on. 
Denote the $i$-th component of $s_k$ by  $s_i^k$ and
the $i$-th component of $x_k$ by  $x_i^k$. Since
$\| s_k \|=o(1/k^{1+\epsilon})$, we have $| s_i^k |=o(1/k^{1+\epsilon})$. 
For the $i$-th component $x_i^{k+1}$ of $x_{k+1}$, we have 
$x_i^{k+1}=x_i^0+\sum_{j=0}^{k} s_i^j$. To show that
$\{ x_i^{k+1} \}$ is convergent, we just need to show
$\lim_{k \rightarrow 0} \sum_{j=0}^{k} s_i^j$ is convergent.
The latter is guaranteed if 
\begin{equation}
\lim_{k \rightarrow 0} \sum_{j=0}^{k} | s_i^j|
\label{absoluteC}
\end{equation}
is convergent. Since $| s_i^k |=o(1/k^{1+\epsilon})$ holds,
i.e., every component 
$\{ x_i^{k} \}$ of $\{ x_{k} \}$ is convergent. Therefore, this
shows that $x_{k} \rightarrow \bar{x}$ and $\| g(\bar{x}) \| = 0$. 
In view of the Assumption 2, the function is locally 
strongly convex in a neighborhood of 
$\bar{x}$ which satisfies $\| g(\bar{x}) \| = 0$, this means that 
$\bar{x}$ is an isolated local minimizer in the neighborhood. 
This shows that the iterates will converge to a local minimizer.

Since Algorithm \ref{newAlgo} is globally convergent and
the convergent point is a local minimizer, for sufficiently large $k$
and for $\delta$ defined in Assumption 2, we have
$f(x_k) \le f(\bar{x}) +\delta$. Therefore, for all $v \in \R^n$,
\begin{equation}
m \|v\|^2 \le v^{\T} H(x_k) v \le M \|v\|^2.
\label{convexk}
\end{equation}
Using Taylor's Theorem \cite[Theorem 2.1]{nocedal99} 
\begin{equation}
y_k=g(x_{k+1})-g(x_k)=\int_{0}^{1} H(x_k+t\alpha_kd_k) s_k\mathrm{d}t \equiv \bar{H}_ks_k,
\label{taylor}
\end{equation}
then (\ref{convexk}) and (\ref{taylor}) imply that 
$\bar{H}_k$ is positive definite, i.e., for all $v \in \R^n$,
\begin{equation}
m \|v\|^2 \le v^{\T} \bar{H}_k v \le M \|v\|^2.
\label{convexbar}
\end{equation}
This gives 
\[
\frac{y_k^{\T}s_k}{s_k^{\T}s_k}=\frac{s_k^{\T}\bar{H}_k s_k}{s_k^{\T}s_k}=
\frac{s_k^{\T}}{\|s_k\|}\bar{H}_k \frac{s_k}{\|s_k\|} \ge m,
\]
and
\[
\frac{y_k^{\T}y_k}{y_k^{\T}s_k}=\frac{s_k^{\T}\bar{H}_k^2s_k}{s_k^{\T}\bar{H}_k s_k}
=\frac{(\bar{H}_k^{\frac{1}{2}}s_k)^{\T}}{\|\bar{H}_k ^{\frac{1}{2}}s_k\|}\bar{H}_k 
\frac{\bar{H}_k^{\frac{1}{2}}s_k}{\|\bar{H}_k^{\frac{1}{2}}s_k\|}
 \le M.
\]
From these two inequalities, in view of Remark \ref{first}, we 
conclude that for large $k$, $\gamma_k=0$ is always selected. 
Therefore, the robust BFGS reduces to the standard BFGS 
for large $k$. In addition, if Assumption 3 holds,
the BFGS converges at a superlinear rate 
\cite[Theorem 8.6]{nocedal99}, therefore the robust BFGS 
also converges at the superlinear rate because it is identical 
to the BFGS for large $k$. 
\hfill \qed
\end{proof}

\begin{remark}
The additional condition $\| s_k \|=o(1/k^{1+\epsilon})$ is sufficient. A 
less restrictive condition $s_k \rightarrow 0$ is necessary.
Considering the function $f(x)=e^x-e^{-x^2}$, which has a
minimum at $x \approx -0.3938$, if the initial point is at
$x_0 = -3$, then $x_k \rightarrow -\infty$ and $g(-\infty)=0$.
But $x_k  \rightarrow -\infty$ is not a local minimizer because
$s_k \rightarrow 0$ does not hold.
\end{remark}

\section{Implementation and Numerical Test}

This section provides detailed information about the implementation 
of the algorithm which is slightly different from the description of 
Algorithm \ref{newAlgo}. It also presents our test results for 
CUTEst problems.

\vspace{0.15in}
\subsection{Implementation details}

Algorithm \ref{newAlgo} has been implemented in Matlab 
function {\tt mBFGS} with the following considerations.
First, the selection of $m$ and $M$ turns out to be important. 
The $m$ and $M$ of $H(\bar{x})$ satisfying (\ref{convex}) 
depend on the individual function to be optimized and each 
of its local minimizers. To be safe, one may select small $m$ 
and large $M$, which will be likely cover all possible functions 
to be optimized, but this selection may not be numerically robust. 
On the other hand, if $m$ is selected too big, and/or $M$
is selected too small, the selection may violate condition 
(\ref{convex}). Our selection of the default set of parameters 
are $m=0.00001$, $M=100000$, and $\epsilon=0.00001$,
which, in general, give very impressive computational result. 

Second, for several test problems, the condition numbers of the 
estimated $E_k^{-1}$ are poor at the early iterations, which 
leads to very large vector $d_k$. If this happens, line search
takes long time to find a better iterate. Therefore, $d_k$ is 
re-scaled to $\| d_k \| = 10^6$ if $\| d_k \| > 10^6$ 
is detected. 

Test result on the algorithm with the above implementation is 
in general very impressive. However, for a few problems, 
it takes many iterations to converge, which may be due to the 
poor estimation of $m$ and $M$ by the default set of the 
parameters (please note that $m$ and $M$ are used to 
select $\gamma_k$). Therefore, a modification that dynamically 
selects $m_k$ and $M_k$ is implemented in {\tt mBFGS} 
for the purpose of getting better estimation of the bounds 
of a particular local minimizer of a particular function. 

From (\ref{gamma0}) and (\ref{gamma2}), it is easy to see that 
$m$ only affects the value of $\check{\gamma}_k$, and $M$ 
only affects the value of $\underline{\gamma}_k$. We want 
to select $m$ and $M$ such that $\check{\gamma}_k$ and 
$\underline{\gamma}_k$ become as small as possible to 
maximize the chance of using the BFGS formula. This requires 
the selection of small $m$ and large $M$, or the selection 
of the ratio of $M$ to $m$ as large as possible. 
On the other hand, we noticed, from (\ref{secant2}), 
that $m$ and $M$ together affect the 
condition number of $E_{k+1}$, which is important to the 
numerical robustness in the computation of $d_{k+1}$ from 
(\ref{dirMBFGS}). This requires the ratio of $M$ to $m$ 
as small as possible.  For the trade off, we select the ratio 
of $M$ to $m$  as $10^{10}$. The nominal parameters in 
{\tt mBFGS} are $\bar{m}=0.00001$, $\bar{M}=100000$, and 
$\epsilon=0.00001$. Because of the fixed ratio of $M$ to $m$, 
we must increase or decrease $m$ and $M$ at the same time. 
From (\ref{gamma0}) and (\ref{gamma2}), increasing $m$ 
and $M$ will decrease $\underline{\gamma}_k$
but increase $\check{\gamma}_k$; and decreasing $m$ and 
$M$ will increase $\underline{\gamma}_k$ but decrease 
$\check{\gamma}_k$. From (\ref{gamma}), we want the 
difference between $\check{\gamma}_k$ and  
$\underline{\gamma}_k$ to be small, so that the final
choice $\gamma_k$ is small. Therefore, we dynamically 
adjust $m$ and $M$ using the following simple heuristics.
%\begin{equation}
%\begin{array}{lll}
%\mbox{Set } m=\bar{m}    & & \\
%\mbox{    } M=\bar{M}    & & \\
%\mbox{Calculate } \check{\gamma}_k   & & \\
%\mbox{if }  \check{\gamma}_k>1    & & \\
%& \mbox{adjust } M=10^4\bar{M} & \\
%& \mbox{then calculate } \underline{\gamma}_k & \\
%\mbox{else}   & & \\
%& \mbox{calculate } \underline{\gamma}_k & \\
%& \mbox{if } \underline{\gamma}_k-\check{\gamma}_k > 0.2 \mbox{ and } \underline{\gamma}_k >0 & \\
%& & \mbox{adjust } M=10^3\bar{M} \mbox{ and } m=10^3\bar{m}\\
%& & \mbox{then recalculate } \underline{\gamma}_k \mbox{ and } \check{\gamma}_k \\
%& \mbox{elseif } \check{\gamma}_k-\underline{\gamma}_k> 0.2  \mbox{ and } \check{\gamma}_k >0 & \\
%& & \mbox{adjust } M=10^{-2}\bar{M} \mbox{ and } m=10^{-2}\bar{m}\\
%& & \mbox{then recalculate } \underline{\gamma}_k \mbox{ and } \check{\gamma}_k \\
%& \mbox{end} & \\
%\mbox{end} & & \\ \nonumber
%\end{array}
%\end{equation}

\begin{algorithm} {Selection of $m$ and $M$ } \\
\begin{algorithmic}[1] 
\STATE Set $m=\bar{m}$, $M=\bar{M}$, and calculate $\check{\gamma}_k$.
\IF{$\check{\gamma}_k>1$}
	\STATE   adjust $M=10^4\bar{M}$.
	\STATE   calculate $\underline{\gamma}_k$.
\ELSIF{$\check{\gamma}_k \le 1$}
	\STATE   calculate $\underline{\gamma}_k$.
	\IF{ $\underline{\gamma}_k-\check{\gamma}_k > 0.2$
	and $\underline{\gamma}_k >0$}
		\STATE  adjust $M=10^3\bar{M}$ and $m=10^3\bar{m}$.
		\STATE  recalculate $\underline{\gamma}_k$ and $\check{\gamma}_k$.
	\ELSIF{$\check{\gamma}_k-\underline{\gamma}_k> 0.2$ and $\check{\gamma}_k >0$}
		\STATE  adjust $M=10^{-2}\bar{M}$ and $m=10^{-2}\bar{m}$.
		\STATE  recalculate $\underline{\gamma}_k$ and $\check{\gamma}_k$.
	\ENDIF
\ENDIF
\end{algorithmic}
\label{mMAlg}
\end{algorithm}

This algorithm will be applied between Lines 6 and 7 of 
Algorithm \ref{newAlgo}, and is executed before the
calculation of (\ref{gamma}).
The above modification significantly reduces the number of 
iterations for the problems which had slow convergence
when fixed $m$ and $M$ were used. Moreover, it has little impact 
on the remaining problems.

\subsection{Numerical test}

The robust BFGS implementation {\tt mBFGS} and
the BFGS algorithm implemented in Matlab Optimization 
Toolbox function {\tt fminunc} are tested using the 
{CUTEst} test problem set. {\tt fminunc} options are set as
\footnotesize
\[
\mbox{options = optimset('LargeScale','off','MaxFunEvals',1e+20,'MaxIter',
5e+5,'TolFun',1e-20, 'TolX',1e-10).}
\]
\normalsize
This setting is selected to ensure that the BFGS implementation {\tt fminunc} 
will have enough iterations either to converge or to fail.  

We conducted tests for both {\tt mBFGS} and {\tt fminunc} 
using the {CUTEst} test problem set, which is downloaded 
from the Princeton test problem collections \cite{web}. 
Since the {CUTEst} test set is presented in AMPL mod-files, 
we first convert AMPL mod-files into nl-files so that Matlab 
functions can read the CUTEst models, then we use Matlab 
functions {\tt mBFGS} and {\tt fminunc} to read the 
nl-files and solve these test problems.
The objective function is calculated from AMPL command 
$\mbox{[f,c] = amplfunc(x,0)}$.
The gradient function is calculated from AMPL command 
$\mbox{[g,Jac] = amplfunc(x,1)}$.
Both {\tt mBFGS} and {\tt fminunc} use these values in 
the optimization algorithms. 
%Because of the restriction of the conversion software 
%which converts mod-files to nl-files, the test is done only 
%for CUTEst unconstrained optimization problems in 
%\cite{web} whose sizes are less than $300$. 
The test uses the initial points provided by the {CUTEst} 
test problem set, we record the calculated objective 
function values, the norms of the gradients at the final 
points, and the iteration numbers for these tested problems. 
We present this test results in Table 1. In this table, 
{\tt iter} stands for the number of total iterations used 
by the algorithm; {\tt obj} is the value of the objective 
function achieved at the end of the iterations; {\tt gradient} 
is the norm of the gradient achieved at the end of the iterations. 

Because the implementations of different algorithms do not
use the same language (Matlab is a interpreted language which
is slower than compiled language such as C/C++ and Fortran),
it is normally not easy to have a fair comparison for different
algorithms with different implementations. Fortunately, the 
analysis in Remark \ref{efficiency} indicated that the computational
cost of the robust BFGS in each iteration is almost the same
as the traditional BFGS, therefore, it makes sense to compare
the iteration counts among different algorithms.

Table 1 compares the traditional BFGS algorithm implemented
in Matlab optimization toolbox {\tt fminunc} and the robust BFGS 
implemented in {\tt mBFGS}.  (Please note that {\tt fminunc} is
also a compiled code but {\tt mBFGS} is an interpreted code. Also
the stopping criterion for {\tt mBFGS} is $\epsilon=0.00001$ .)

\footnotesize
\begin{center}
%\caption{Test result for problems in CUTEst \cite{web}, initial points are given in CUTEst}
\begin{longtable}{|c|r|r|r|r|r|r|r|}
\caption{Test result for problems in CUTEst \cite{web}, initial points are given in CUTEst} \\
\hline    
Problem & size & iter   &  obj    & gradient  & iter    & obj     & gradient  \\
        & n    & mBFGS  &  mBFGS  & mBFGS     & fminunc & fminunc & fminunc    \\
\hline
%penalty2   & -  & 213 & 6  &   4.46E-09      & & &        \\ \hline
arglina    & 100  &  1    & 100.0       & 0.0  & 4  & 100.0      &  0.1662e-03       \\ \hline
bard       & 3  &  18   & 0.8214e-02       & 0.2082e-06  & 20 & 0.8214e-02      &  0.1158e-05     \\ \hline
beale      & 2  &  13   & 0.3075e-14    & 0.1817e-06  & 15 & 0.2400e-09   &  0.1392e-06     \\ \hline
biggs6     & 6  &  70   & 0.1080e-09    & 0.7865e-05  & 68 & 0.4796e-03      &  {\bf 0.1802e-01} \\ \hline
box3       & 3  &  19   & 0.4077e-10    & 0.1828e-05  & 24 & 0.3880e-10   &  0.2364e-5     \\ \hline
brkmcc     & 2  &  4    & 0.1690e-00    & 0.8114e-07  & 5  & 0.1690e-00      &  0.4542e-06     \\ \hline
brownal    & 10  &  12   & 0.1406e-12   & 0.6968e-05  & 16 & 0.3050e-08   &  0.1043e-03      \\ \hline
brownbs    & 2  &  632  & 0.1950e-17    & 0.5369e-06  & 11 & 0.9308e-04      &  {\bf 15798.5950e-00} \\ \hline
brownden   & 4  &  21   & 85822.2016e-00    & 0.3135e-03  & 32 & 85822.2017e-00 &  {\bf 0.4646e-00} \\ \hline
chnrosnb   & 50  &  158  & 0.1686e-15    & 0.9838e-05  & 98 & 30.0583e-00      &  {\bf 10.1863e-00} \\ \hline
cliff      & 2  &  27   & 0.1997e-00   & 0.8919e-05  & 1  & 1.0015e-00      &  {\bf 1.4147e-00} \\ \hline
cube       & 2  &  21   & 0.4231e-19    & 0.6905e-09  & 34 & 0.7987e-09   &  0.1340e-03       \\ \hline
deconvu    & 51  &  80   & 0.3158e-06    & 0.1750e-03  & 80 & 0.3158e-06   &  0.1750e-03     \\ \hline
denschna   & 2  &  7    & 0.1467e-13    & 0.3346e-06  & 10 & 0.5000e-12   &  0.1581e-05     \\ \hline
denschnb   & 2  &  7    & 0.6047e-13    & 0.5505e-06  & 7  & 0.1000e-11   &  0.2200e-05     \\ \hline
denschnc   & 2  &  8    & {\bf 0.1833e-00} & 0.4104e-07  & 21 & 0.1608e-08   &  0.3262e-03     \\ \hline
denschnd   & 3  &  38   & 0.2461e-08    & 0.4040e-06  & 23 & 45.2971e-00      &  {\bf 84.5851e-00} \\ \hline
denschnf   & 2  &  10   & 0.4325e-17    & 0.3919e-07  & 10 & 0.2000e-10   &  0.1005028e-03     \\ \hline
dixon3dq   & 10  &  15   & 0.5626e-15    & 0.5588e-07  & 20 & 0.1400e-11   &  0.3661e-5     \\ \hline
djtl       & 2  &  79  & -8951.5447e-00       & 0.2881e-01  & 3  & -8033.8869e-00      &  {\bf 1273.3319e-00} \\ \hline
eigenals   & 110  &  79   & 0.7521e-13    & 0.3483e-05  & 78 & 0.1092e-02   &  {\bf 0.1029e-00} \\ \hline
eigenbls   & 110  &  513  & 0.1994e-10    & 0.7166e-05  & 91 & 0.3462e-00      &  {\bf 0.4642e-00} \\ \hline
engval2    & 3  &  27   & 0.1172e-12    & 0.9109e-95  & 29 & 0.3953e-09   &  0.2799e-03     \\ \hline
errinros   & 81  &  48   & 0.2442e-96    & 0.5141e-95  & 92 & 0.4577e-03      &  {\bf 0.2553e-00} \\ \hline
expfit     & 2  &  11   & 0.2405e-00       & 0.2350e-05  & 12 & 0.2405e-05      &  0.2263e-05     \\ \hline
extrosnb   & 10  &  1    & 0.0       & 0.0     & 1  & 0.0      &  0.0       \\ \hline
fletcbv2   & 100  &  97   & -0.5140e-00       & 0.9676e-05  & 98 & -0.5140e-00      &  0.1087e-4     \\ \hline
fletchcr   & 100  &  179  & 0.1114e-13     & 0.4440e-05  & 63 & 68.1289e-00       &  {\bf 160.9879e-00} \\ \hline
genhumps   & 5  &  47   & 0.1871e-09    & 0.8571e-05  & 59 & 0.4493e-07   &  0.3167e-03     \\ \hline
growthls   & 3  &  1    & {\bf 3542.1490e-00} & 0      & 12 & 12.4523e-00      &  {\bf 0.5809e-01} \\ \hline
hairy      & 2  &  18   & 20.0       & 0.5710e-05  & 22 & 20.0    &  0.3810e-04     \\ \hline
hatfldd    & 3  &  23   & 0.6615e-07    & 0.1853e-05  & 19 & 0.066150e-07   &  0.2355e-05     \\ \hline
hatflde    & 3  &  28   & 0.4434e-06    & 0.5321e-05  & 9  & 0.6210e-06   &  0.7970e-05     \\ \hline
heart6ls   & 6  & 2180  & 0.2620e-14    & 0.6696e-5  & 53 & 0.6318e-00      &  {\bf 71.9382548e-00} \\ \hline
helix      & 3  &  22   & 0.5489e-16    & 0.1901e-06  & 29 & 0.2260e-10   &  0.4196e-04     \\ \hline
hilberta   & 10  &  20   & 0.2422e-06    & 0.5563e-05  & 35 & 0.2289e-06   &  0.3263e-05     \\ \hline
hilbertb   & 50  &  11   & 0.4606e-12    & 0.3123e-05  & 6  & 0.21e-11   &  0.6542e-5     \\ \hline
himmelbb   & 2  &  1    & 0.9665e-13    & 0.1153e-06  & 6  & 0.1462e-04      &  0.1251e-02        \\ \hline
himmelbf   & 2  &  7    & 0.1069e-12    & 0.9337e-06  & 8  & 0.1000e-12   &  0.1448e-05     \\ \hline
himmelbg   & 2  &  7    & 0.1069e-12    & 0.9337e-06  & 8  & 0.1000e-12   &  0.1448e-05     \\ \hline
himmelbh   & 2  &  7    & -0.9999e-00   & 0.5188e-06  & 7  & -0.9999e-00  &  0.2607e-06     \\ \hline
humps      & 2  &  85   & 0.2281e-09    & 0.6755e-05  & 25 & 5.4248e-00      &  {\bf 2.3625e-00} \\ \hline
jensmp     & 2  &  1    & {\bf 2020}       & 0              & 16 & 124.3621e-00      &  0.2897e-05     \\ \hline
kowosb     & 4  &  27   & 0.3075e-03    & 0.1426e-05  & 33 & 0.3075e-03   &  0.1253e-06     \\ \hline
loghairy   & 2  &  87   & 0.1823e-00       & 0.6892e-06  & 11 &{\bf 2.5199e-00}&  0.5377e-02        \\ \hline
mancino    & 100  &  35   & 0.162e-16    & 0.9715e-05  & 9  & 0.2204e-02      &  {\bf 1.2243e-00} \\ \hline
maratosb   & 2  &  3    & -1.0       &  0.5142e-07  & 2  & -0.9997e-00      &  {\bf 0.3570e-01} \\ \hline
mexhat     & 2  &  1    & -0.4009e-01       & 0.6621e-05  & 4  & -0.4009e-01      &  0.13703e-04     \\ \hline

%meyer3     & -  & & & & & & \\ \hline
%osbornea   & -  & & & & & & \\ \hline
osborneb   & 11  &  51   & 0.4013e-01       & 0.2474e-05  & 76 & 0.4013e-01      &  0.7884e-05     \\ \hline
palmer1c   & 8  &  32   & 0.9759e-00     & 0.3995e-06  & 38 & 16139.4418e-00      &  {\bf 655.0159e-00} \\ \hline
palmer2c   & 8  &  86   & 0.1442e-01       & 0.1013e-05  & 60 & 98.0867e-00      &  {\bf 33.4524e-00} \\ \hline
palmer3c   & 8  &  47   & 0.1953e-01       & 0.8975e-05  & 56 & 54.3139e-00      &  {\bf 7.8518e-00} \\ \hline
palmer4c   & 8  &  77   & 0.5031e-01       & 0.6125e-05  & 56 & 62.2623e-00      &  {\bf 6.6799e-00} \\ \hline
palmer5c   & 6  &  12   & 2.1280e-00    & 0.2881e-05  & 14 & 2.1280e-00      &  0.7484e-03       \\ \hline
palmer6c   & 8  &  55   & 0.1638e-01       & 0.4847e-05  & 43 & 18.0992e-00      &  {\bf 0.7851e-00} \\ \hline
palmer7c   & 8  &  40   & 0.6019e-00   & 0.1289e-05  & 28 & 56.9098e-00      &  {\bf 4.0268e-00} \\ \hline
palmer8c   & 8  &  46   & 0.1597e-00   & 0.4974e-05  & 49 & 22.4365e-00      &  {\bf 1.3147e-00} \\ \hline
%pfit1ls    & -  & & & & & & \\ \hline
%pfit2ls    & -  & & & & & & \\ \hline
%pfit3ls    & -  & & & & & & \\ \hline
%pfit4ls    & -  & & & & & & \\ \hline
powellsq   & 2  & 0 & 0 & 0 & 0 & 0 & 0 \\ \hline
rosenbr    & 2  &  32   & 0.1382e-15    & 0.5153e-06  & 36 & 0.283e-10   &  2.6095e-5     \\ \hline
sineval    & 2  &  68   & 01271e-15    & 0.9558e-06  & 47 & 0.2212e-00      &  {\bf 1.2315e-00} \\ \hline
sisser     & 2  &  14   & 0.2809e-07    & 0.8680e-05  & 11 & 0.1540e-7   &  0.7282671e-5     \\ \hline
tointqor   & 50  &  37   & 1175.4722e-00   & 0.7734e-05  & 40 & 1175.4722e-00 &  0.9041e-07     \\ \hline
vardim     & 100  &  21   & 0.0001e-20    & 0.1126e-07  & 1  & 0.2244e-06   &  {\bf 0.5511e-00} \\ \hline
watson     & 31  &  48   & 0.2442e-06    & 0.5141e-05  & 90 & 0.1050e-02      &  {\bf 0.4875e-00} \\ \hline
yfitu      & 3  &  74   & 0.6670e-12    & 0.5396e-05  & 57 & 0.4398e-02      &  {\bf 11.8427e-00} \\ \hline
\end{longtable}
\label{BFGSvsmBFGS}
\end{center}
\normalsize

We summarize the comparison of the test result as follows:
\begin{itemize}
\item[1.] The robust BFGS function {\tt mBFGS} converges 
in all the test problems after terminate condition 
$\| g(x_k) \|< 10^{-5}$ is met except for three problems 
{\tt brownden}, {\tt deconvu}, and {\tt djtl}. But for these 
problems, {\tt mBFGS} finds better solutions than {\tt fminunc}. 
Moreover, for about $40\%$ of the problems, Matlab Toolbox  
BFGS function {\tt fminunc} does not reduce $\| g(x_k) \|$ 
to smaller than $0.01$. For these problems, the objective 
functions obtained by {\tt fminunc} normally 
are not close to the minimum;
\item[2.] For those problems that both {\tt mBFGS} and 
{\tt fminunc} converge, {\tt mBFGS} most time uses less 
iterations than {\tt fminunc} and converges to a point with 
smaller $\| g(x_k) \|$;
\item[3.] There are three problems ({\tt denschnc}, 
{\tt growthls}, and {\tt jensmp}), {\tt mBFGS} converges 
to a local minimum but {\tt fminunc} finds a better point.
\end{itemize}

An alternative way to see the superiority of the proposed 
algorithm is to use {\it performance profile} which, to 
our best knowledge, was first proposed 
in \cite{ty96} and carefully analyzed in \cite{dm02}. Let 
${\cal S}$ be the set of solvers and ${\cal P}$ be the set
of test problems. Let $n_s$ be the number of solvers, $n_p$ be 
the number of test problems, $m_{p,s}$ be the merit function 
(which can be for example the iteration numbers) 
of using solver $s$ for the problem $p$. The performance 
ratio is defined as \cite{ty96}:
\begin{equation}
r_{p,s}= \frac{m_{p,s}}{\min \{ m_{p,s}: s \in {\cal S}  \}}.
\label{performRatio}
\end{equation}
The performance profiles for {\tt mBFGS} and 
the Matlab {\tt fminunc} are given in Figure \ref{fig:profile1}.
Clearly, robust BFGS implementation {\tt mBFGS} finds the 
optimal solution faster than the traditional BFGS implementation
{\tt fminunc} for the tested problems.

\begin{figure}[ht]
\centerline{\epsfig{file=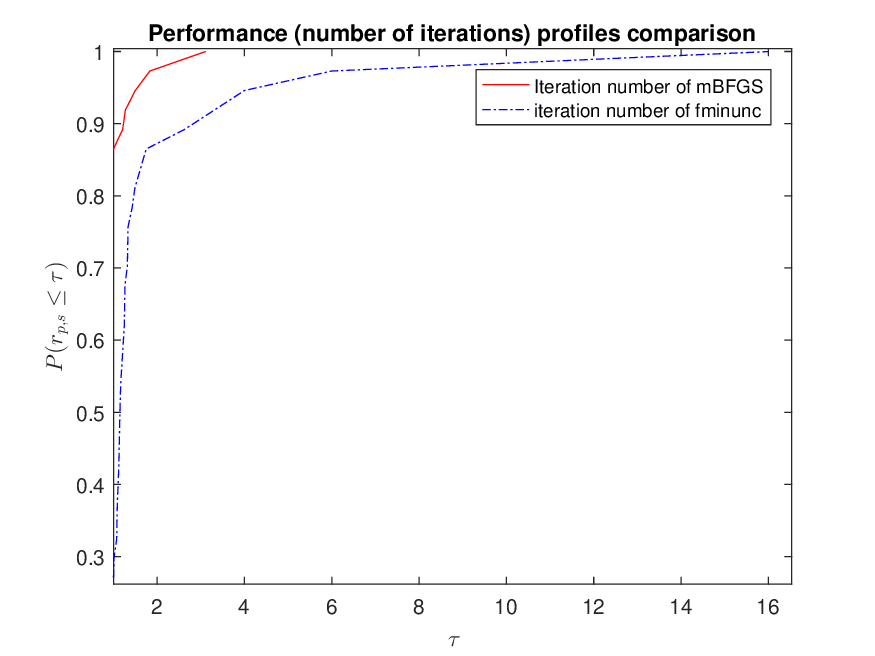,height=6cm,width=10cm}}
\caption{Performance profiles comparison for {\tt mBFGS} and 
{\tt fminunc}.}
\label{fig:profile1}
\end{figure} 

\begin{remark}
We also implemented (\ref{robustBFGS}) and 
(\ref{dirMBFGS}) and compared with the implementation of
(\ref{E1}) and (\ref{calMBFGS}) in the calculation of the 
search direction $d_k$. With the implementation of 
(\ref{robustBFGS}) and (\ref{dirMBFGS}), the 
algorithm converges with the criterion 
$\| g(x_k) \|< 10^{-5}$ as required for all the test 
problems, including {\tt brownden}, {\tt deconvu}, 
and {\tt djtl}. We noticed that although {\tt mBFGS} 
using (\ref{E1}) and (\ref{calMBFGS}) stops before 
$\| g(x_k) \|< 10^{-5}$ is achieved for these three 
problems, the objective functions obtained are essentially 
the same as if (\ref{robustBFGS}) and (\ref{dirMBFGS}) 
are used. Given the fact that using (\ref{E1}) and 
(\ref{calMBFGS}) does not require to solve the linear 
systems of equations but using (\ref{robustBFGS}) and 
(\ref{dirMBFGS}) does, we suggest using the implementation 
of (\ref{E1}) and (\ref{calMBFGS}).
\end{remark}

Most of the above problems are also used by other 
researchers \cite{hager} to test some established and 
state-of-the-art algorithms. In \cite{hager}, 
CUTEst unconstrained problems are tested against 
limited memory BFGS algorithm \cite{nocedal80} 
(implemented as {\tt L-BFGS}), a descent and conjugate 
gradient algorithm \cite{hz05} (implemented as 
{\tt CG-Descent 5.3}), and a limited memory descent 
and conjugate gradient algorithm \cite{hz12} 
(implemented as {\tt L-CG-Descent}). 
%The sizes of 
%most of these test problems are smaller than or equal 
%to $300$. The size of the largest test problems in 
%\cite{hager} is $10000$. Since our AMPL convention 
%software does not work for problems whose sizes are 
%greater than $300$, we consider only problems in 
%\cite{hager} whose sizes are smaller than or equal to $300$. 
We compare the test results obtained by our implementation 
of Algorithm \ref{newAlgo} and the results obtained by 
algorithms \cite{hz12,hz05,nocedal80} (reported in 
\cite{hager}). For this test, we changed the stopping 
criterion to $\| g(x) \|_{\infty} \le 10^{-6}$
which is used in \cite{hager} for comparison between the 
results obtained in \cite{hz12,hz05,nocedal80}. The test 
results are listed in Table 2.

\footnotesize
\begin{center}
\begin{longtable}{|r|r|r|r|r|r|r|r|}
\caption{Comparison of mBFGS, L-CG-Descent, L-BFGS, and CG-Descent 5.3 for 
problems in CUTEst \cite{web}, initial points are given in CUTEst}\\
\hline    
Problem      & size   & methods          & iter   & nFun & nGrad &  obj    & gradient      \\
\hline
arglina      &  200   & mBFGS            &   1   & 14 & 2  &  1.000e+002             &   1.894e-015  \\ 
             &        & L-CG-Descent     &   1   & 3  & 2  &  {\bf 2.000e+002}       &   3.384e-008  \\
             &        & L-BFGS           &   1   & 3  & 2  &  {\bf 2.000e+002}       &   3.384e-008  \\
             &        & CG-Descent 5.3   &   1   & 3  & 2  &  {\bf 2.000e+002}       &   2.390e-007  \\
\hline
bard         &  3     & mBFGS            &   18  &  95  &  19     &   1.157e-001      &    9.765e-007           \\ 
             &        & L-CG-Descent     &   16  &  33  &  17     &   8.215e-003      &    3.673e-009           \\
             &        & L-BFGS           &   16  &  33  &  17     &   8.215e-003      &    3.673e-009           \\
             &        & CG-Descent 5.3   &   21  &  44  &  23     &   8.215e-003      &    1.912e-007           \\
\hline
beale        &  2     & mBFGS            &   13  &  76  &  14      &   4.957e-020      &    2.979e-010           \\ 
             &        & L-CG-Descent     &   15  &  31  &  16      &   2.727e-015      &    4.499e-008           \\
             &        & L-BFGS           &   15  &  31  &  16      &   2.727e-015      &    4.499e-008           \\
             &        & CG-Descent 5.3   &   18  &  37  &  19      &   1.497e-007      &    4.297e-007           \\
\hline
biggs6       &  6     & mBFGS            &   73  & 335  &  74      &   7.777e-013        &    4.920e-007         \\ 
             &        & L-CG-Descent     &   27  &  57  &  31      &   {\bf 5.656e-003}  &    2.514e-008         \\
             &        & L-BFGS           &   27  &  57  &  31      &   {\bf 5.656e-003}  &    2.514e-008         \\
             &        & CG-Descent 5.3   &   85  & 177  &  93      &   {\bf 5.656e-003}  &    9.195e-007         \\
\hline
box3         &  3     & mBFGS            &   21  &  77  &  22      &   1.692e-016        &    4.450e-008         \\ 
	     &        & L-CG-Descent     &   11  &  24  &  13      &   3.819e-013        &    7.584e-007         \\
	     &        & L-BFGS           &   11  &  24  &  13      &   3.819e-013        &    7.584e-007         \\
	     &        & CG-Descent 5.3   &   13  &  27  &  14      &   1.707e-010        &    6.003e-007         \\
\hline
brkmcc       &  2     & mBFGS            &   4   &  34  &  5     &    1.690e-001     &    8.034e-008           \\ 
             &        & L-CG-Descent     &   5   &  11  &  6     &    1.690e-001     &    6.220e-008           \\
             &        & L-BFGS           &   5   &  11  &  6     &    1.690e-001     &    6.220e-008           \\
             &        & CG-Descent 5.3   &   4   &   9  &  5     &    1.690e-001     &    5.272e-008           \\
\hline
%brownal      &  200   & mBFGS            &   7     &    1.496e-016     &    8.518e-009           \\ 
%             &        & L-CG-Descent     &   9     &    2.704e-018     &    4.540e-008           \\
%             &        & L-BFGS           &   4     &    1.473e-009     &    6.663e-007           \\
%             &        & CG-Descent 5.3   &   12    &    6.562e-011     &    1.392e-007           \\
%\hline
brownbs      &  2     & mBFGS            &   632  &  13543  &  633      &   1.952e-018      &   5.369e-007     \\ 
             &        & L-CG-Descent     &   13   &  26     &   15      &   0.000e+000      &   0.000e+000     \\
             &        & L-BFGS           &   13   &  26     &   15      &   0.000e+000      &   0.000e+000     \\
             &        & CG-Descent 5.3   &   16   &  40     &   33      &   1.972e-031      &   8.882e-010     \\
\hline
brownden     &  4     & mBFGS            &   21  & 312 &   22   &   8.582e+004      &   3.092e-010            \\ 
             &        & L-CG-Descent     &   16  & 31  &   19   &   8.582e+004      &   1.282e-007            \\
             &        & L-BFGS           &   16  & 31  &   19   &   8.582e+004      &   1.282e-007            \\
             &        & CG-Descent 5.3   &   38  & 74  &   48   &   8.582e+004      &   9.083e-007            \\
\hline
chnrosnb     &  50    & mBFGS            &   160  &  2185  & 161   &  1.263e-015       &  3.525e-007             \\ 
             &        & L-CG-Descent     &   287  &   564  & 299   &  6.818e-014       &  5.414e-007             \\
             &        & L-BFGS           &   216  &   427  & 233   &  1.582e-013       &  5.565e-007             \\
             &        & CG-Descent 5.3   &   287  &   564  & 299   &  6.818e-014       &  5.414e-007             \\
\hline
cliff        &  2     & mBFGS            &   15   & 75  &  16    &   1.998e-001      &   7.602e-008            \\ 
             &        & L-CG-Descent     &   18   & 70  &  54    &   1.998e-001      &   2.316e-009            \\
             &        & L-BFGS           &   18   & 70  &  54    &   1.998e-001      &   2.316e-009            \\
             &        & CG-Descent 5.3   &   19   & 40  &  21    &   1.998e-001      &   6.352e-008            \\
\hline
cube         &  2     & mBFGS            &   21  & 134  &  22  &   4.231e-020      &   6.845e-010            \\ 
             &        & L-CG-Descent     &   32   & 77  &  47  &   1.269e-017      &   1.225e-009            \\
             &        & L-BFGS           &   32   & 77  &  47  &   1.269e-017      &   1.225e-009            \\
             &        & CG-Descent 5.3   &   33   & 80  &  49  &   6.059e-015      &   4.697e-008            \\
\hline
deconvu      &  61    & mBFGS            &   67   & 855  &  68     &   1.567e-009      &   9.999e-007            \\ 
             &        & L-CG-Descent     &   475  & 951  &  476    &   1.189e-008      &   9.187e-007            \\
             &        & L-BFGS           &   208  & 417  &  209    &   2.171e-010      &   8.924e-007            \\
             &        & CG-Descent 5.3   &   475  & 951  &  476    &   1.184e-008      &   9.078e-007            \\
\hline
denschna     &  2     & mBFGS            &   7   & 35   &  8     &   1.468e-014      &   3.198e-007            \\ 
             &        & L-CG-Descent     &   9   & 19   &  10    &   3.167e-016      &   3.527e-008            \\
             &        & L-BFGS           &   9   & 19   &  10    &   3.167e-016      &   3.527e-008            \\
             &        & CG-Descent 5.3   &   9   & 19   &  10    &   7.355e-016      &   4.825e-008            \\
\hline
denschnb     &  2     & mBFGS            &   7   & 44   &  8   &   6.048e-014      &   4.252e-007            \\ 
             &        & L-CG-Descent     &   7   & 15   &  8   &   3.641e-017      &   1.034e-008           \\
             &        & L-BFGS           &   7   & 15   &  8   &   3.641e-017      &   1.034e-008            \\
             &        & CG-Descent 5.3   &   8   & 17   &  8   &   4.702e-014      &   4.131e-007            \\
\hline
denschnc     &  2     & mBFGS            &   8   & 55   &  9     &  1.119e-021       &   1.731e-010            \\ 
             &        & L-CG-Descent     &   12  & 26   &  14    &  3.253e-019       &   3.276e-009            \\
             &        & L-BFGS           &   12  & 26   &  14    &  3.253e-019       &   3.276e-009            \\
             &        & CG-Descent 5.3   &   12  & 27   &  15    &  {\bf 1.834e-001} &   4.143e-007            \\
\hline
denschnd     &  3     & mBFGS            &   38  & 308  &  39     &  2.461e-009       &   3.146e-007            \\ 
             &        & L-CG-Descent     &   47  & 98   &  51     &  4.331e-010       &   8.483e-007            \\
             &        & L-BFGS           &   47  & 98   &  51     &  4.331e-010       &   8.483e-007            \\
             &        & CG-Descent 5.3   &   45  & 97   &  54     &  8.800e-009       &   6.115e-007            \\
\hline
denschnf     &  2     & mBFGS            &   10  &  75  &  11    &   4.325e-018      &    3.027e-008           \\ 
             &        & L-CG-Descent     &   8   &  17  &  9     &   2.126e-015      &    6.455e-007           \\
             &        & L-BFGS           &   8   &  17  &  9     &   2.126e-015      &    6.455e-007           \\
             &        & CG-Descent 5.3   &   11  &  24  &  13    &   1.104e-017      &    6.614e-008           \\
\hline
djtl         &  2     & mBFGS            &   79  &  1524  &  80    &   -8.952e+003      &    2.265e-002           \\ 
             &        & L-CG-Descent     &   82  &  917   &  880   &   -8.952e+003      &    8.865e-009           \\
             &        & L-BFGS           &   82  &  917   &  880   &   -8.952e+003      &    8.865e-009           \\
             &        & CG-Descent 5.3   &   93  &  770   &  714   &   -8.952e+003      &    3.521e-007           \\
\hline
engval2      &  3     & mBFGS            &   28  & 188  &  29   &  1.999e-018       &   9.405e-008            \\ 
             &        & L-CG-Descent     &   26  & 61   &  37   &  1.034e-016       &   8.236e-007            \\
             &        & L-BFGS           &   26  & 61   &  37   &  1.034e-016       &   8.236e-007            \\
             &        & CG-Descent 5.3   &   76  & 161  &  88   &  3.185e-014       &   5.682e-007            \\
\hline
expfit       &  2     & mBFGS            &   12  & 103  &  13    &   2.405e-001      &    2.916e-009           \\ 
     	     &        & L-CG-Descent     &   13  &  29  &  16    &   2.405e-001      &    4.208e-007           \\
             &        & L-BFGS           &   13  &  29  &  16    &   2.405e-001      &    4.208e-007           \\
             &        & CG-Descent 5.3   &   15  &  34  &  20    &   2.405e-001      &    1.758e-007           \\
\hline
growthls     &  3     & mBFGS            &   1    &  3    &  2     &   {\bf 3.542e+003} &    0.000e-999           \\ 
     	     &        & L-CG-Descent     &   143  &  425  &  299   &   1.004e+000       &    3.317e-007           \\
             &        & L-BFGS           &   143  &  425  &  399   &   1.004e+000       &    3.317e-007           \\
             &        & CG-Descent 5.3   &   441  &  997  &  596   &   1.004e+000       &    1.835e-007           \\
\hline
hairy        &  2     & mBFGS            &   19  & 160  &  20     &  2.000e+001       &   6.143e-008            \\ 
             &        & L-CG-Descent     &   36  & 99   &  65     &  2.000e+001       &   7.961e-011            \\
             &        & L-BFGS           &   36  & 99   &  65     &  2.000e+001       &   7.961e-011            \\
             &        & CG-Descent 5.3   &   14  & 35   &  24     &  2.000e+001       &   1.044e-007            \\
\hline
hatfldd      &  3     & mBFGS            &   24  & 119  &  25     &  6.615e-008       &   1.107e-007            \\ 
	     &        & L-CG-Descent     &   20  & 43   &  24     &  2.547e-007       &   1.936e-007            \\
	     &        & L-BFGS           &   20  & 43   &  24     &  2.547e-007       &   1.936e-007            \\
             &        & CG-Descent 5.3   &   40  & 98   &  61     &  6.617e-008       &   1.934e-007            \\
\hline
hatflde      &  3     & mBFGS            &   30  & 136  &  31     &  4.434e-007       &   6.576e-007            \\ 
	     &        & L-CG-Descent     &   30  & 72   &  45     &  2.000e+001       &   5.012e-007            \\
	     &        & L-BFGS           &   30  & 72   &  45     &  2.000e+001       &   5.012e-007            \\
             &        & CG-Descent 5.3   &   53  & 120  &  72     &  2.000e+001       &   5.012e-007            \\
\hline
heart6ls     &  6     & mBFGS            &   2266  & 8700  & 2267    &  2.865e-023       &   6.934e-009        \\ 
             &        & L-CG-Descent     &   684   & 1576  &  941    &  2.646e-010       &   5.562e-007        \\
             &        & L-BFGS           &   684   & 1576  &  941    &  2.646e-010       &   5.562e-007        \\
             &        & CG-Descent 5.3   &   2570  & 5841  & 3484    &  1.305e-010       &   2.421e-007        \\
\hline
helix        &  3     & mBFGS            &   22  & 154  &  23    &   5.489e-017      &    1.349e-007           \\ 
             &        & L-CG-Descent     &   23  & 49   &  27    &   1.604e-015      &    3.135e-007           \\
             &        & L-BFGS           &   23  & 49   &  27    &   1.604e-015      &    3.135e-007           \\
             &        & CG-Descent 5.3   &   44  & 90   &  46    &   2.427e-013      &    6.444e-007           \\
\hline 
himmelbb     &  2     & mBFGS            &   1  &  23  &   2    &   9.665e-014      &    8.167e-008           \\ 
             &        & L-CG-Descent     &   10 &  28  &  21    &   9.294e-013      &    2.375e-007           \\
             &        & L-BFGS           &   10 &  28  &  21    &   9.294e-013      &    2.375e-007           \\
             &        & CG-Descent 5.3   &   11 &  23  &  12    &   1.584e-013      &    1.084e-008           \\
\hline
himmelbg     &  2     & mBFGS            &   7  &  45  &   8    &   1.070e-013      &    9.071e-007           \\ 
             &        & L-CG-Descent     &   8  &  20  &  13    &   9.294e-013      &    2.375e-007           \\
             &        & L-BFGS           &   8  &  20  &  13    &   9.294e-013      &    2.375e-007           \\
             &        & CG-Descent 5.3   &   10 &  24  &  15    &   1.584e-013      &    1.084e-008           \\
\hline
himmelbh     &  2     & mBFGS            &   7  &  45  &   8    &    -1.000e+000     &    5.026e-007           \\ 
             &        & L-CG-Descent     &   7  &  16  &   9    &    -1.000e+000     &    2.892e-011           \\
             &        & L-BFGS           &   7  &  16  &   9    &    -1.000e+000     &    2.892e-011           \\
             &        & CG-Descent 5.3   &   7  &  16  &   9    &    -1.000e+000     &    1.381e-007           \\
\hline
humps        &  2     & mBFGS            &   104  & 857  & 105     &    3.280e-016     &    6.351e-009           \\ 
             &        & L-CG-Descent     &   53   & 165  & 120     &    3.682e-012     &    8.552e-007           \\
             &        & L-BFGS           &   53   & 165  & 120     &    3.682e-012     &    8.552e-007           \\
             &        & CG-Descent 5.3   &   48   & 140  & 101     &    3.916e-012     &    8.774e-007           \\
\hline
jensmp       &  2     & mBFGS            &   1  &   3  &   2    &  {\bf 2.020e+003} &    0.000e-999           \\ 
             &        & L-CG-Descent     &   15 &  33  &  22    &    1.244e+002     &    5.302e-010           \\
             &        & L-BFGS           &   15 &  33  &  22    &    1.244e+002     &    5.302e-010           \\
             &        & CG-Descent 5.3   &   13 &  29  &  19    &    1.244e+002     &    4.206e-009           \\
\hline
kowosb       &  4     & mBFGS            &   28 &  147  &  29    &    3.075e-004     &    1.367e-007           \\ 
             &        & L-CG-Descent     &   17 &   39  &  23    &    3.078e-004     &    3.704e-007           \\
             &        & L-BFGS           &   17 &   39  &  23    &    3.078e-004     &    3.704e-007           \\
             &        & CG-Descent 5.3   &   66 &  139  &  76    &    3.078e-004     &    8.818e-007           \\
\hline
loghairy     &  2     & mBFGS            &   74  & 882  &  75   &    1.823e-001     &    5.904e-007           \\ 
             &        & L-CG-Descent     &   27  & 81   &  58   &    1.823e-001     &    1.762e-007           \\
             &        & L-BFGS           &   27  & 81   &  58   &    1.823e-001     &    1.762e-007           \\
             &        & CG-Descent 5.3   &   46  & 136  &  97   &    1.823e-001     &    7.562e-008           \\
\hline
mancino      &  100   & mBFGS            &   37   &  1202   &   38  &     1.548e-020    &     1.414e-007          \\ 
             &        & L-CG-Descent     &   11   &   23    &   12  &     9.245e-021    &     7.239e-008          \\
             &        & L-BFGS           &   9    &   19    &   30  &     3.048e-021    &     1.576e-007          \\
             &        & CG-Descent 5.3   &   11   &   23    &   12  &     9.245e-021    &     7.239e-008          \\
\hline
maratosb     &  2     & mBFGS            &   3    &  59     &   4     &   -1.000e+000      &   5.142e-008            \\ 
             &        & L-CG-Descent     &   1145 &  3657   &  2779   &   -1.000e+000      &   3.216e-007            \\
             &        & L-BFGS           &   1145 &  3657   &  2779   &   -1.000e+000      &   3.216e-007            \\
             &        & CG-Descent 5.3   &   946  &  2911   &  2191   &   -1.000e+000      &   3.230e-009            \\
\hline
mexhat       &  2     & mBFGS            &   5   &  32  &   6      &    -4.010e-002     &     1.426e-012          \\ 
             &        & L-CG-Descent     &   20  &  56  &   39     &    -4.001e-002     &     4.934e-009          \\
             &        & L-BFGS           &   20  &  56  &   39     &    -4.001e-002     &     4.934e-009          \\
             &        & CG-Descent 5.3   &   27  &  61  &   36     &    -4.001e-002     &     3.014e-007          \\
\hline
osborneb     &  11    & mBFGS            &   53  &  377  &  54     &    4.014e-002     &     2.480e-007          \\ 
	     &        & L-CG-Descent     &   62  &  127  &  65     &    4.014e-002     &     4.427e-007          \\
	     &        & L-BFGS           &   62  &  127  &  65     &    4.014e-002     &     4.427e-007          \\
             &        & CG-Descent 5.3   &  214  &  423  & 219     &    4.014e-002     &     7.485e-007          \\
\hline
palmer1c     &  8     & mBFGS            &   32     &  211   &  33     &    9.760e-002     &     3.935e-007          \\ 
             &        & L-CG-Descent     &   11     &   26   &  26     &    9.761e-002     &     1.254e-009          \\
             &        & L-BFGS           &   11     &   26   &  26     &    9.761e-002     &     1.254e-009          \\
             &        & CG-Descent 5.3   &   126827 & 224532 & 378489  &    9.761e-002     &     9.545e-007          \\
\hline
palmer2c     &  8     & mBFGS            &   112    &  446   &  113     &   1.442e-002      &     8.296e-007          \\ 
             &        & L-CG-Descent     &   11     &  21    &  21      &   1.437e-002      &     1.257e-008          \\
             &        & L-BFGS           &   11     &  21    &  21      &   1.437e-002      &     1.257e-008          \\
             &        & CG-Descent 5.3   &   21362  & 21455  & 42837    &   1.437e-002      &     5.761e-007          \\
\hline
palmer3c     &  8     & mBFGS            &   47   & 245  &  48   &   1.954e-002      &     2.050e-008          \\ 
             &        & L-CG-Descent     &   11   &  20  &  20   &   1.954e-002      &     1.754e-010          \\
             &        & L-BFGS           &   11   &  20  &  20   &   1.954e-002      &     1.754e-010          \\
             &        & CG-Descent 5.3   &   5536 & 5777 & 11379 &   1.954e-002      &     9.753e-007          \\
\hline
palmer4c     &  8     & mBFGS            &   78    & 351    &  79    &   5.031e-002      &     2.235e-007          \\ 
             &        & L-CG-Descent     &   11    &  20    &  20    &   5.031e-002      &     3.928e-009          \\
             &        & L-BFGS           &   11    &  20    &  20    &   5.031e-002      &     3.928e-009          \\
             &        & CG-Descent 5.3   &   44211 & 49913  & 96429  &   5.031e-002      &     9.657e-007          \\
\hline
palmer5c     &  6     & mBFGS            &   13  &  157  &  14   &    2.128e+000     &      4.810e-009         \\ 
             &        & L-CG-Descent     &   6   &  13   &  7    &    2.128e+000     &      3.749e-012         \\
             &        & L-BFGS           &   6   &  13   &  7    &    2.128e+000     &      3.749e-012         \\
             &        & CG-Descent 5.3   &   6   &  13   &  7    &    2.128e+000     &      2.629e-009         \\
\hline
palmer6c     &  8     & mBFGS            &   56    &  243   &   57   &   1.639e-002      &     6.900e-007          \\ 
             &        & L-CG-Descent     &   11    &  24    &   24   &   1.639e-002      &     5.520e-009          \\
             &        & L-BFGS           &   11    &  24    &   24   &   1.639e-002      &     5.520e-009          \\
             &        & CG-Descent 5.3   &   14174 & 142228 & 28411  &   1.639e-002      &     7.738e-007          \\
\hline
palmer7c     &  8     & mBFGS            &   41    &  212  &   42     &   6.020e-001      &     5.201e-007          \\ 
             &        & L-CG-Descent     &   11    &   20  &   20     &   6.020e-001      &     7.132e-009          \\
             &        & L-BFGS           &   11    &   20  &   20     &   6.020e-001      &     7.132e-009          \\
             &        & CG-Descent 5.3   &   65294 & 78428 & 149585   &   6.020e-001      &     9.957e-007          \\
\hline
palmer8c     &  8     & mBFGS            &   48   &   361  &  49    &   1.598e-001      &     1.099e-009          \\ 
             &        & L-CG-Descent     &   11   &   18   &  17    &   1.598e-001      &     2.376e-009          \\
             &        & L-BFGS           &   11   &   18   &  17    &   1.598e-001      &     2.376e-009          \\
             &        & CG-Descent 5.3   &   8935 & 9903   & 19183  &   1.598e-001      &     9.394e-007          \\
\hline
rosenbr      &  2     & mBFGS            &   32  &  241  &  33   &   1.383e-016      &     4.603e-007          \\ 
             &        & L-CG-Descent     &   34  &   77  &  44   &   4.691e-018      &     7.167e-008          \\
             &        & L-BFGS           &   34  &   77  &  44   &   4.691e-018      &     7.167e-008          \\
             &        & CG-Descent 5.3   &   37  &   86  &  52   &   1.004e-014      &     1.894e-007          \\
\hline
sineval      &  2     & mBFGS            &   69  &  489  &  70   &   1.910e-019      &     1.168e-008          \\ 
             &        & L-CG-Descent     &   60  &  143  &  87   &   1.556e-023      &     1.817e-011          \\
             &        & L-BFGS           &   60  &  143  &  87   &   1.556e-023      &     1.817e-011          \\
             &        & CG-Descent 5.3   &   62  &  172  & 122   &   1.023e-012      &     5.575e-007          \\
\hline
sisser       &  2     & mBFGS            &   19  &  83  &  20   &    3.860e-010     &     4.587e-007          \\ 
             &        & L-CG-Descent     &   6   &  18  &  14   &    6.830e-012     &     2.220e-008          \\
             &        & L-BFGS           &   6   &  18  &  14   &    6.830e-012     &     2.220e-008          \\
             &        & CG-Descent 5.3   &   6   &  13  &   7   &    3.026e-014     &     3.663e-010          \\
\hline
tointqor     &  50    & mBFGS            &   39  &  615  &  40   &   1.176e+003      &     4.033e-007          \\ 
             &        & L-CG-Descent     &   29  &  36   &  53   &   1.175e+003      &     4.467e-007          \\
             &        & L-BFGS           &   28  &  35   &  51   &   1.175e+003      &     7.482e-007          \\
             &        & CG-Descent 5.3   &   29  &  36   &  53   &   1.175e+003      &     4.464e-007          \\
\hline
vardim       &  200   & mBFGS            &   22  & 154  &  23     &   1.237e-021      &     1.376e-009          \\ 
             &        & L-CG-Descent     &   10  &  21  &  11     &   4.168e-019      &     2.582e-007          \\
             &        & L-BFGS           &   7   &  31  &  27     &   5.890e-025      &     3.070e-010          \\
             &        & CG-Descent 5.3   &   10  &  21  &  11     &   4.168e-019      &     2.582e-007          \\
\hline
watson       &  12    & mBFGS            &   61  &  308  &  62    &   1.130e-008      &     3.081e-007          \\ 
             &        & L-CG-Descent     &   49  &  102  &  54    &   1.592e-007      &     8.026e-007          \\
             &        & L-BFGS           &   48  &  97   &  49    &   9.340e-008      &     1.319e-007          \\
             &        & CG-Descent 5.3   &   726 &  145  &  727   &   1.139e-007      &     8.115e-007          \\
\hline
yfitu        &  2     & mBFGS            &   73  & 462   &  74     &   6.670e-013      &     1.938e-007          \\ 
             &        & L-CG-Descent     &   75  & 177   &  106    &   8.074e-010      &     3.910e-007          \\
             &        & L-BFGS           &   75  & 177   &  106    &   8.074e-010      &     3.910e-007          \\
             &        & CG-Descent 5.3   &   147 & 327   &  189    &   2.969e-011      &     5.681e-007          \\
\hline
\end{longtable}
\end{center}
\normalsize

We summarize the comparison of the test results as follows:
\begin{itemize}
\item[1.] For two problems ({\tt arglina} and {\tt biggs6}), 
{\tt mBFGS} converges to better points
than {\tt L-CG-Descent}, {\tt L-BFGS}, and {\tt CG-Descent 5.3}. 
For another 2 problems ({\tt growthls} 
and {\tt jensmp}), {\tt L-CG-Descent}, {\tt L-BFGS}, and 
{\tt CG-Descent 5.3} converge to better points. 
\item[2.] For 19 problems, {\tt mBFGS} converges faster 
than {\tt L-CG-Descent}, {\tt L-BFGS}, and 
{\tt CG-Descent 5.3}. For about 10 problems, {\tt mBFGS} 
converges slower than {\tt L-CG-Descent}, {\tt L-BFGS}, and 
{\tt CG-Descent 5.3}. For the rest problems, {\tt mBFGS} 
converges either faster than some but slower than other
codes or as faster as all other codes.
\end{itemize}

The performance profiles for {\tt mBFGS}, {\tt L-CG-Descent},
{\tt L-BFGS}, and {\tt CG-Descent 5.3} are given in Figure 
\ref{fig:profile2}. Clearly, robust BFGS implementation {\tt mBFGS} 
finds the optimal solution faster than other modified BFGS
algorithms for the tested problems. The performances of 
{\tt L-CG-Descent} and {\tt L-BFGS} are similar. The performance
of {\tt CG-Descent 5.3} is not as good as {\tt L-CG-Descent} and 
{\tt L-BFGS}.

\begin{figure}[ht]
\centerline{\epsfig{file=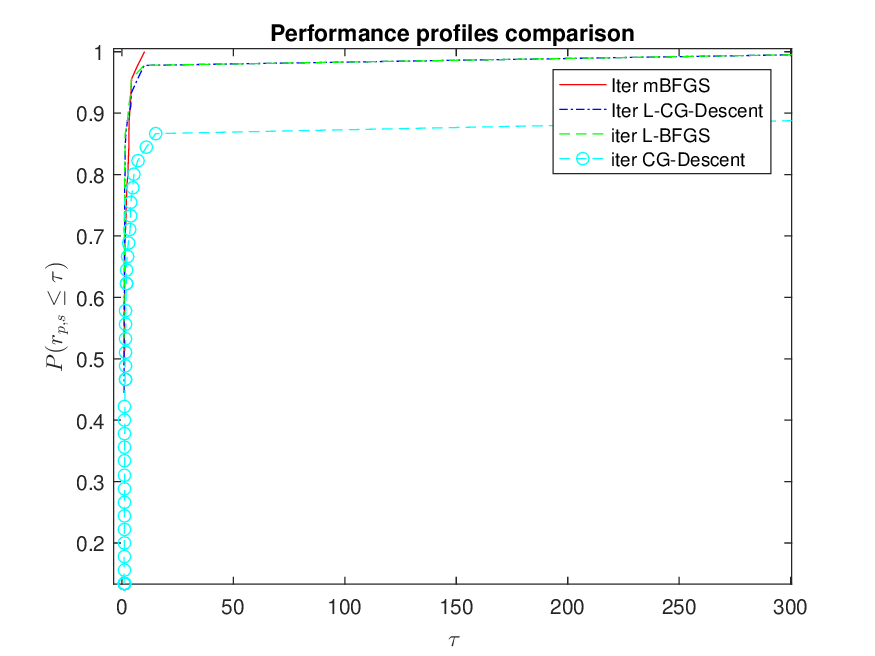,height=6cm,width=10cm}}
\caption{Performance profiles comparison for {\tt mBFGS}, 
{\tt L-CG-Descent}, {\tt L-BFGS}, and {\tt CG-Descent 5.3}.}
\label{fig:profile2}
\end{figure} 

Based on these numerical test results, we believe that the proposed 
algorithm is very promising.

\section{Conclusions}
We have proposed a robust BFGS algorithm that converges 
to a local optimum under some mild assumptions
for not only convex optimization problems but also for
non-convex optimization problems. In addition, we proved that 
the robust BFGS algorithm is globally and superlinearly 
convergent for the worst case, the non-convex optimization 
problems. We have shown that the computational cost in
each iteration is almost the same for both the BFGS algorithm
and the robust BFGS algorithm. We have provided numerical 
test results and compared the performance of the robust 
BFGS to the performance of other established and 
state-of-the-art algorithms, such as BFGS, limited memory 
BFGS, descent and conjugate gradient, and limited 
memory descent and conjugate gradient. The results and
comparison show that the robust BFGS algorithm appears 
very efficient and effective.

\section{Acknowledgments}
The author is indebted to the anonymous reviewer and the
associate editor for their constructive comments and suggestions
that are invaluable in the preparation of the final version of the paper. 
The author would like to thank Mr. Mike Case, the Director 
of the Division of Engineering in the Office of 
Research at US NRC, and Dr. Chris Hoxie, in the Office 
of Research at US NRC, for their providing computational 
environment for this research.
The author is grateful to Professor Teresa Monterio at 
Universidade do Minho for providing the software to 
convert AMPL mod-files into nl-files, which makes the 
test possible. 

\section{Conflict of interest}

On behalf of all authors, the corresponding author states that there is no conflict of
interest.

%\bibliography{Myrefs}  % The name of your .bib file.

%\bibliographystyle{siam}     % The style of your bibliography

\end{document}